\newcommand{\kom}[1]{}
\renewcommand{\kom}[1]{{\bf [#1]}}
 \def\1{\raisebox{2pt}{\rm{$\chi$}}}
\newtheorem{theorem}{Theorem}[section]
\newtheorem{corollary}[theorem]{Corollary}
\newtheorem{lemma}[theorem]{Lemma}
\newtheorem{proposition}[theorem]{Proposition}
\newtheorem{definition}[theorem]{Definition}
\newcommand{\N}{{\mathbb N}}
\newcommand{\Z}{{\mathbb Z}}
\newcommand{\E}{{\mathbb E\,}}
\newcommand{\ha}{{\mathcal{H}}}
 \newcommand{\eps}{{\varepsilon}}
 \def\1{\raisebox{2pt}{\rm{$\chi$}}}
\newcommand{\norm}[1]{\left|\left|#1\right|\right|}
\def\vint_#1{\mathchoice%
          {\mathop{\kern 0.2em\vrule width 0.6em height 0.69678ex depth -0.58065ex
                  \kern -0.8em \intop}\nolimits_{\kern -0.4em#1}}%
          {\mathop{\kern 0.1em\vrule width 0.5em height 0.69678ex depth -0.60387ex
                  \kern -0.6em \intop}\nolimits_{#1}}%
          {\mathop{\kern 0.1em\vrule width 0.5em height 0.69678ex
              depth -0.60387ex
                  \kern -0.6em \intop}\nolimits_{#1}}%
          {\mathop{\kern 0.1em\vrule width 0.5em height 0.69678ex depth -0.60387ex
                  \kern -0.6em \intop}\nolimits_{#1}}}
\def\vintslides_#1{\mathchoice%
          {\mathop{\kern 0.1em\vrule width 0.5em height 0.697ex depth -0.581ex
                  \kern -0.6em \intop}\nolimits_{\kern -0.4em#1}}%
          {\mathop{\kern 0.1em\vrule width 0.3em height 0.697ex depth -0.604ex
                  \kern -0.4em \intop}\nolimits_{#1}}%
          {\mathop{\kern 0.1em\vrule width 0.3em height 0.697ex depth -0.604ex
                  \kern -0.4em \intop}\nolimits_{#1}}%
          {\mathop{\kern 0.1em\vrule width 0.3em height 0.697ex depth -0.604ex
                  \kern -0.4em \intop}\nolimits_{#1}}}
\newcommand{\intav}{\vint}
\newcommand{\aveint}[2]{\mathchoice%
          {\mathop{\kern 0.2em\vrule width 0.6em height 0.69678ex depth -0.58065ex
                  \kern -0.8em \intop}\nolimits_{\kern -0.45em#1}^{#2}}%
          {\mathop{\kern 0.1em\vrule width 0.5em height 0.69678ex depth -0.60387ex
                  \kern -0.6em \intop}\nolimits_{#1}^{#2}}%
          {\mathop{\kern 0.1em\vrule width 0.5em height 0.69678ex depth -0.60387ex
                  \kern -0.6em \intop}\nolimits_{#1}^{#2}}%
          {\mathop{\kern 0.1em\vrule width 0.5em height 0.69678ex depth -0.60387ex
                  \kern -0.6em \intop}\nolimits_{#1}^{#2}}}
\newcommand{\re}{\mathbb{R}}
\newcommand{\rn}{\mathbb{R}^n}
\newcommand{\na}{\mathbb{N}}
\begin{document}
\title[The variation of the maximal function of a radial function
]
{The variation of the maximal function of a radial function
}
\author{Hannes Luiro}
\address{Department of Mathematics and Statistics\\
University of Jyv\"{a}skyl\"{a}\\
P.O.Box 35 (MaD)\\
40014 University of Jyv\"{a}skyl\"{a}, Finland}
\email{hannes.s.luiro@jyu.fi}
\subjclass[2010]{
42B25, 46E35, 26A45
}
\thanks{The author was supported by the Academy of Finland, project no. 292797}
\maketitle
{\small \textbf{Abstract.} 
It is shown for the non-centered Hardy-Littlewood 
maximal operator $M$ that $\norm{DMf}_1\leq C_n\norm{Df}_1$ for all radial functions in $W^{1,1}(\rn)\,$. 
\section{Introduction}
The non-centered Hardy-Littlewood maximal operator $M$ is defined by
setting for $f\in L^1_{loc}(\rn)\,$ that
\begin{equation}\label{eq:1}
M f(x)=\sup_{B(z,r)\ni x}
\frac{1}{|B(z,r)|}\int_{B(z,r)}|f(y)|\,dy\,=:\,\sup_{B(z,r)\ni x}\intav_{B(z,r)}|f(y)|\,dy\,
\end{equation}
for every $x\in\rn\,$. The centered version of $M$, denoted by $M_c$, is defined by taking the supremum over all balls centered at $x$.
The classical theorem of Hardy, Littlewood and Wiener asserts that $M$ (and $M_c$) is 
bounded on $L^p(\rn)\,$ for $1<p\leq\infty\,$. This result is one of the cornerstones
of the harmonic analysis. While the absolute size of a maximal function is usually the principal interest, the applications 
in Sobolev-spaces and in the potential theory have motivated the active research of the regularity properties of maximal functions. 
The first observation was made by Kinnunen who verified \cite{Ki} that
$M_c$ is bounded in Sobolev-space $W^{1,p}(\rn)\,$ if $1<p\leq\infty\,$, and inequality
\begin{equation}\label{est1}
|DM_cf(x)|\leq M_c(|Df|)(x)\,
\end{equation}
holds for all $x\in\rn$.
The proof is relatively simple and inequality (\ref{est1}) (and the boundedness) holds also for $M$ and many other variants. 

The most challenging open problem in this field is so called '$W^{1,1}$-problem': Does it hold for all $f\in W^{1,1}(\rn)\,$, that
$Mf\in W^{1,1}(\rn)$ and 
\begin{equation*}
\norm{DMf}_1\leq C_n\norm{Df}_1\,?
\end{equation*}
This problem has been discussed (and studied) for example in \cite{AlPe}, \cite{CaHu}, \cite{CaMa}, \cite{HO}, \cite{HM}, \cite{Ku} and \cite{Ta}.
The fundamental obstacle is that $M$ is not bounded in $L^1$ and therefore inequality (\ref{est1}) is not enough to solve the problem.
In the case $n=1$ the answer is known to be positive, as was proved by Tanaka \cite{Ta}. For $M_c$ the problem turns out to be very complicated also when $n=1$.  
However, Kurka \cite{Ku} managed to show that the answer is positive also in this case.

The goal of this paper is to develop technology for $W^{1,1}$-problem in higher 
dimensions, where the problem is still completely open. The known proofs in the one-dimensional case are strongly based on the 
simplicity of the topology: the crucial trick 
(in the non-centered case) is that 
$Mf$ does not have a strict local maximum outside the set $\{Mf(x)=f(x)\}$. 
This fact is a strong tool when $n=1$ but is far from sufficient for higher dimensions. 


The formula for the derivative of the maximal function (see Lemma \ref{peruskama} or \cite{L}) has an important role in the paper. It says that
if $Mf(x)=\intav_{B}|f|$, $|f(x)|<Mf(x)<\infty$, and $Mf$ is differentiable at $x$, then 
\begin{equation}\label{formula}
 DMf(x)=\intav_{B}Df(y)\,dy\,.
\end{equation}
From this formula one can see immediately the validity of the estimate (\ref{est1}) for $M$. However, since $B$ is exactly the ball which gives the maximal average (for $|f|$), 
it is expected that one can derive from (\ref{formula}) much more sophisticated estimates than (\ref{est1}).
In Section \ref{sec2} (Lemma \ref{peruskama}), we perform basic analysis related 
to this issue. The key observation we make is that if $B$ is as above, then  
\begin{equation}\label{formula2}
 \int_{B}Df(y)\cdot(y-x)\,dy\,=\,0\,.
\end{equation}
In the backround of this equality stands a more general princinple, concerning other maximal operators
as well: if the value of the maximal function is attained to ball (or other permissible object) $B$, then the \textit{weighted} 
integral of $|Df|$ over $B$ is zero for a set of weights depending on the maximal operator. We believe that the utilization of this principle is a key for a possible solution of 
$W^{1,1}$-problem. 


%
%
%
%
%
%
As the main result of this paper, we employ equality (\ref{formula2}) to show that in the case of \textit{radial functions} the answer 
to $W^{1,1}$-problem is positive (Theorem \ref{main}). Even in this case the problem is evidently non-trivial and truly differs from the one-dimensional case.  
To become convinced about this, consider the important special case where $f$ is
radially decreasing ($f(x)=g(|x|)$, where $g:[0,\infty)\to \re$ is decreasing). In this case
$Mf$ is radially decreasing as well and $Mf(0)=f(0)$. 
If $n=1$, these facts immediately imply that $\norm{DMf}_1=\norm{Df}_1$, but if $n\geq 2$ this is definitely not the case: the additional estimates are necessary.  
This type of estimate for radially decreasing functions can be derived 
from (\ref{formula}) and (\ref{formula2}), saying that
\begin{equation}\label{ref1}
|DMf(x)|\leq \frac{C_n}{|x|}\intav_{B(0,|x|)}|Df(y)||y|\,dy\,.
\end{equation}
By using this inequality, the positive answer to $W^{1,1}$-problem for radially decreasing functions follows straightforwardly by 
Fubini Theorem (Corollary \ref{cor1}). 

For general radial functions, inequality (\ref{ref1}) turns out to hold only if the maximal average is achieved in a ball with radius comparable to 
$|x|$. To overcome this problem, we study 
the auxiliary maximal function $M^{I}$, defined for $f\in L^{1}_{loc}(\rn)$ by
\begin{equation*}
M^If(x)=\sup_{x\in B(z,r), r\leq|x|/4 }\intav_{B(z,r)}|f(y)|\,dy\,,
\end{equation*}
and
prove (Lemma \ref{basic3}) that for all radial $f\in W^{1,1}(\rn)$ it holds that
\begin{equation}\label{eq:eq}
 \norm{DM^If}_1\leq C_n\norm{Df}_1\,.
\end{equation}  
The proof of this auxiliary result resembles the proof of  $W^{1,1}$-problem (for $M$) in the case $n=1$. 
As the first step, we prove by straightforward calculation that for the 'endpoint operator' of $M^I$, defined by 
\begin{equation}\label{def99}
f_{/4}(x):=\sup_{x\in B(z,|x|/4)}\intav_{B(z,|x|/4)}\,|f(y)|\,dy\,,
\end{equation}
it holds that $\norm{Df_{/4}}_1\leq C\norm{Df}_1$ for all $f\in W^{1,1}(\rn)$. 
Recall again the fact that $Mf$ does not have a local maximum in $\{Mf(x)>|f(x)|\}$, leading to the estimate
$\norm{DMf}_1\leq \norm{Df}_1$ in the case $n=1$. As a multidimensional counterpart for radial functions, we show that
$M^If$ does not have a local maximum in $\{M^If(x)>\max\{|f(x)|,f_{/4}(x)\}\}$ and for every $k\in \mathbb{Z}$ it holds that
\begin{equation*}
 \int_{\{2^k\leq|y|\leq 2^{k+1}\}}DM^If(y)\,dy\,\leq\,C_n\int_{\{2^{k-1}\leq |y|\leq 2^{k+2}\}}|D|f|(y)|\,dy\,.
\end{equation*}
Estimate (\ref{eq:eq}) can be easily derived from this fact. 
The main result follows by combining (\ref{eq:eq}) and exploiting the estimate (\ref{ref1}) in $\{Mf(x)>M^{I}f(x)\}$.

\subsection*{Question}
The analysis presented in this paper raises the interest towards the study of the integrability properties of some \textit{conditional} maximal operators.
As an example, (\ref{formula}) and (\ref{formula2}) yield that $|DMf(x)|\leq \widetilde{M}(D|f|)(x)$, where 
$\widetilde{M}$ is defined for all locally integrable gradient fields $F:\rn\to\rn$ by
\begin{equation*}
\widetilde{M}F(x)= \sup\bigg\{\,\bigg|\intav_{B(z,r)}F\bigg|\,:\,x\in B(z,r)\,,\,\,\int_{B(z,r)}F(y)\cdot(y-x)dy\,=0\,\bigg\}\,.
\end{equation*}
It is clear that $\widetilde{M}F$ is bounded by $M(|F|)$, but does it hold that $\widetilde{M}$ has even better integrability properties than $M$? What about the boundedness
in the Hardy-space $H^1$ or even in $L^1$? Notice that the boundedness of $\widetilde{M}$ in $L^1$ would imply the solution to  $W^{1,1}$-problem. 
This problem is almost completely open, even in the case $n=1$. Counterexamples would be highly interesting as well.   

\textit{Acknowledgements.} The author would like to thank Antti V\"{a}h\"{a}kangas for useful comments on the manuscript and inspiring discussions.
\section{Preliminaries and general results}\label{sec2}
Let us introduce some notation. The boundary of the $n$-dimensional unit ball is denoted by $S^{n-1}$. The $s$-dimensional Hausdorff measure is denoted by 
$\mathcal{H}^s$. The volume of the $n$-dimensional unit ball is denoted by $\omega_n$ and the $\mathcal{H}^{n-1}$-measure of $S^{n-1}$ by $\sigma_n$.  The weak derivative of 
$f$ (if exists) is denoted by $Df$. 
If $v\in S^{n-1}$, then 
\begin{equation*}
D_vf(x):=\lim_{h\to 0}\frac{1}{h}(f(x+hv)-f(x))\,,
\end{equation*}
in the case the limit exists.
\begin{definition}
For $f\in L^1_{loc}(\rn)$ let
\begin{align*}
\mathcal{B}_x:=&\{B(z,r):\,x\in\bar{B}(z,r),r>0,\intav_{B}|f|=Mf(x)\}\,.
\end{align*}
\end{definition}
It is easy to see that if $f\in L^1(\rn)$ and $|f(x)|<Mf(x)<\infty$, then $\mathcal{B}_x\not=\emptyset\,$.

The following lemma is the main result of this section. We point out that below $(6)$ is especially useful in the case of radial functions.
\begin{lemma}\label{peruskama}
Suppose that $f\in W^{1,1}(\rn)$, $Mf(x)>f(x)$ and $Mf$ is differentiable at $x$. Then
\begin{enumerate}
\item
For all $v\in S^{n-1}$ and $B\in\mathcal{B}_x\,$, it holds that 
\begin{equation*}
DMf(x)=\intav_{B}D|f|(y)\,dy\,\text{ and }\,D_{v}Mf(x)=\intav_{B}D_{v}|f|(y)\,dy\,.
\end{equation*}
\item
If $x\in B$ for some $B\in \mathcal{B}_x$, then $DMf(x)=0\,.$
\item
If $x\in \partial B$, $B=B(z,r)\in \mathcal{B}_x$ and $DMf(x)\not=0$, then
\begin{equation*}
\frac{DMf(x)}{|DMf(x)|}=\frac{z-x}{|z-x|}\,.
\end{equation*}
\item If $B\in\mathcal{B}_x$, then 
\begin{equation}\label{char}
\int_{B}D|f|(y)\cdot (y-x)\,dy\,=\,0\,.
\end{equation}
\item If $x\in \partial B$, $B=B(z,r)\in \mathcal{B}_x$, then
\begin{equation*}
|DMf(x)|=\frac{1}{r}\intav_{B}D|f|(y)\cdot (z-y)\,dy\,.
\end{equation*}
\item If $B\in\mathcal{B}_x$, then
\begin{equation}\label{nice}
DMf(x)\cdot\frac{x}{|x|}=\frac{1}{|x|}\intav_{B}D|f|(y)\cdot y\,dy\,.
\end{equation}
\end{enumerate}
\end{lemma}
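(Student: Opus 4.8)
The plan is to read all six statements as first-order optimality conditions for a ball $B\in\mathcal{B}_x$, which by definition maximizes the average $\intav_{B(z,r)}|f|$ among balls whose closure contains $x$. Two one-parameter families of competing balls generate everything: rigid translations of $B$ (which produce the derivative formulas (1)--(3)) and dilations of $B$ about the point $x$ (which produce the orthogonality relation (4)). Throughout, the analytic engine is that for $|f|\in W^{1,1}(\rn)$ the difference quotients $h^{-1}(|f(\cdot+hw)|-|f(\cdot)|)$ converge in $L^1_{loc}$ to $D|f|\cdot w$, so that integral averages over the \emph{fixed} domain $B$ may be differentiated under the integral sign.

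For (1), fix $B=B(z,r)\in\mathcal{B}_x$ and $v\in S^{n-1}$, and compare $x$ with $x+hv$ using the translated ball $B(z+hv,r)\ni x+hv$. A change of variables gives $Mf(x+hv)\geq\intav_{B(z+hv,r)}|f|=\intav_{B}|f(y+hv)|\,dy$, hence $Mf(x+hv)-Mf(x)\geq\intav_B(|f(y+hv)|-|f(y)|)\,dy$; dividing by $h>0$ and letting $h\to0^+$ yields $D_vMf(x)\geq\intav_B D_v|f|(y)\,dy$. Applying the same inequality to $-v$ and using that $Mf$ is differentiable at $x$ (so $D_{-v}Mf(x)=-D_vMf(x)$) gives the reverse inequality, whence equality; writing $D_vMf(x)=DMf(x)\cdot v$ and $D_v|f|=D|f|\cdot v$ and letting $v$ range over $S^{n-1}$ forces $DMf(x)=\intav_BD|f|(y)\,dy$. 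Statements (2) and (3) then follow from the directions in which translations are \emph{admissible}: if $x$ lies in the interior of $B$, every small translate $x+hv$ still lies in $B$, so $Mf(x+hv)\geq Mf(x)$ in all directions, $x$ is a local minimum of the differentiable function $Mf$, and $DMf(x)=0$. If instead $x\in\partial B$, this monotonicity holds only for inward directions $v$ with $v\cdot(x-z)\leq0$, giving $DMf(x)\cdot v\geq0$ on that entire half-space; the only vectors nonnegatively paired with such a half-space are the nonnegative multiples of $z-x$, which is exactly (3).

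The crux is (4), where I would vary $B$ by dilation about $x$ rather than by translation. For $t$ near $1$ set $B_t:=\{x+t(y-x):y\in B\}=B(x+t(z-x),tr)$; since dilation about $x$ fixes $x$, we have $x\in\bar{B_t}$, so each $B_t$ is an admissible competitor and $g(t):=\intav_{B_t}|f|\leq Mf(x)=g(1)$ with equality at $t=1$. Rescaling the domain back to $B$ gives $g(t)=\intav_B|f(x+t(y-x))|\,dy$, and differentiating at the interior maximum $t=1$ (again via $L^1$ convergence of difference quotients, now along the rays $y\mapsto y-x$) yields $0=g'(1)=\intav_B D|f|(y)\cdot(y-x)\,dy$, which is (4). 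Finally (5) and (6) are purely algebraic consequences. For (5), combine (1) and (3) to write $|DMf(x)|=DMf(x)\cdot\frac{z-x}{r}=\frac1r\intav_BD|f|(y)\cdot(z-x)\,dy$, then replace $z-x$ by $z-y$ at no cost, since their difference $y-x$ integrates to zero by (4) (the case $DMf(x)=0$ is handled by observing both sides then vanish). For (6), (1) gives $DMf(x)\cdot\frac{x}{|x|}=\frac1{|x|}\intav_BD|f|(y)\cdot x\,dy$, and (4) lets one trade $x$ for $y$ inside the average. I expect the main obstacle to be purely technical: justifying the differentiation under the integral sign at the distinguished parameter value (the one-sided limits in both the translation and the dilation arguments), which requires the $L^1$ convergence of the relevant difference quotients together with a Fubini argument ensuring the directional derivatives of $|f|$ are well defined along almost every line. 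Once this is in place, the geometric and algebraic steps are routine.
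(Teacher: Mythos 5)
Your proposal is correct and follows essentially the same route as the paper: part (1) via two-sided comparison with translated balls, part (2) as an interior local minimum, part (4) by dilating $B$ about the fixed point $x$ and differentiating at the interior maximum $t=1$ (this is exactly the paper's Lemma \ref{aux2} applied to $L_h(y)=y+h(y-x)$), and parts (5)--(6) by the same algebra; your explicit treatment of the case $DMf(x)=0$ in (5) is a point the paper passes over silently. The one spot needing repair is (3): for tangential directions $v$ (i.e. $v\cdot(x-z)=0$) the point $x+hv$ lies strictly \emph{outside} $\bar{B}(z,r)$, since $|x+hv-z|^2=r^2+h^2$, so the monotonicity $Mf(x+hv)\geq Mf(x)$ you invoke is only available for strictly inward $v$; this failure is precisely why the paper keeps the center $z$ fixed and inflates the radius to $|z-(x+h_iv)|$, using Proposition \ref{aux1} to show the tangential derivatives vanish. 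Your argument survives with a one-line fix: from the strictly inward directions you get $DMf(x)\cdot v\geq 0$ on the open half-space $\{v:v\cdot(z-x)>0\}$, which extends to the closed half-space by continuity of $v\mapsto DMf(x)\cdot v$, and your dual-cone conclusion (that $DMf(x)$ is a nonnegative multiple of $z-x$) then goes through unchanged.
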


The proof of Lemma \ref{peruskama} is essentially based on the following auxiliary propositions. 
\begin{proposition}\label{aux1}
Suppose that $f\in W^{1,1}(\rn)$, $B$ is a ball, $h_i\in\re$ such that $h_i\to 0$ as $i\to\infty$, and $B_i=L_i(B)$, where $L_i$ are \textit{affine} mappings and 
\begin{equation*}
\lim_{i\to\infty}\frac{L_i(y)-y}{h_i}\,=\,g(y)\,.
\end{equation*}
Then 
\begin{equation}\label{fir}
\lim_{i\to \infty}\frac{1}{h_i}\bigg(\intav_{B_i}f(y)\,dy\,-\intav_{B}f(y)\,dy\,\bigg)\,=\,\intav_{B}Df(y)\cdot g(y)\,dy\,.
\end{equation}
\end{proposition}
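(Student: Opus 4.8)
The plan is to move everything onto the \emph{fixed} domain $B$ by the affine change of variables $y\mapsto L_i(y)$, and then to differentiate in the parameter by means of a Fundamental-Theorem-of-Calculus representation that survives for merely $W^{1,1}$ functions. Writing $L_i(y)=A_iy+b_i$ with $A_i$ linear and $b_i\in\rn$, the exact identity
\[
\intav_{B_i}f(y)\,dy=\intav_{B}f(L_i(y))\,dy
\]
holds for every integrable $f$, because $B_i=L_i(B)$ and the Jacobian factor $|\det A_i|$ cancels against the normalization $|B_i|=|\det A_i|\,|B|$ (for $i$ large $A_i$ is invertible, which is all that is needed). Thus the difference quotient in (\ref{fir}) equals $\intav_B\frac{f(L_i(y))-f(y)}{h_i}\,dy$, and the whole problem becomes passing to the limit in this expression.

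First I would extract the structure forced by the hypothesis. Evaluating $\frac{L_i(y)-y}{h_i}\to g(y)$ at $y=0$ and at the standard basis vectors $e_1,\dots,e_n$ shows that $g$ is itself affine, that $\frac{A_i-I}{h_i}$ and $\frac{b_i}{h_i}$ converge, and hence (since $h_i\to0$) that $A_i\to I$ and $b_i\to0$. Consequently $g_i(y):=\frac{L_i(y)-y}{h_i}$ converges to $g$ \emph{uniformly} on the bounded set $B$, and the affine homotopy $H^i_s(y):=y+s(L_i(y)-y)$, whose linear part is $I+s(A_i-I)$, converges to the identity map uniformly in $(s,y)\in[0,1]\times B$, with Jacobian determinants bounded away from $0$ and $\infty$ once $i$ is large.

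The key step is the representation
\[
f(L_i(y))-f(y)=\int_0^1 Df(H^i_s(y))\cdot(L_i(y)-y)\,ds\qquad\text{for a.e. }y\in B .
\]
For smooth $f$ this is just the chain rule and the ordinary Fundamental Theorem of Calculus; for $f\in W^{1,1}$ I would obtain it by approximating $f$ in $W^{1,1}$ by smooth $f_k$ and passing to the limit in $L^1(B)$ on both sides, the convergence of the right-hand side being supplied by the change of variables $w=H^i_s(y)$ together with the uniform control of $|\det(I+s(A_i-I))|$ and of $|L_i(y)-y|$ over $B$. Dividing by $h_i$ and integrating over $B$ then yields
\[
\frac1{h_i}\Big(\intav_{B_i}f-\intav_{B}f\Big)=\int_0^1\intav_B Df(H^i_s(y))\cdot g_i(y)\,dy\,ds .
\]

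Finally I would let $i\to\infty$. Splitting $Df(H^i_s(y))\cdot g_i(y)=\big(Df(H^i_s(y))-Df(y)\big)\cdot g_i(y)+Df(y)\cdot\big(g_i(y)-g(y)\big)+Df(y)\cdot g(y)$, the last term integrates to the desired limit $\intav_B Df\cdot g$, and the two error terms vanish: the second since $g_i\to g$ uniformly on $B$ while $Df\in L^1$, and the first since $|g_i|$ is uniformly bounded and $\sup_{s\in[0,1]}\int_B|Df(H^i_s(y))-Df(y)|\,dy\to0$. This last uniform $L^1$-continuity is the main technical obstacle; I would prove it in the standard way, approximating the field $Df$ by a continuous compactly supported one, the continuous part being handled by uniform continuity (because $H^i_s\to\mathrm{Id}$ uniformly in $(s,y)$) and the approximation error by the change of variables $w=H^i_s(y)$ with uniformly controlled Jacobian. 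In short, the only real difficulty is that $f$ is merely $W^{1,1}$, so neither the pointwise FTC nor dominated convergence applies directly; both are circumvented by reducing to smooth functions and exploiting that all the affine maps in play have Jacobians uniformly comparable to $1$.
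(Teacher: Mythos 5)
Your proof is correct and takes essentially the same route as the paper: both pass to the fixed ball $B$ via the affine change of variables (with the Jacobian cancelling against the normalization) and then differentiate under the integral sign. The paper compresses the limiting step into a single heuristic ``$\approx$'', whereas you justify it rigorously through the FTC representation along the affine homotopy and the $L^1$-continuity of $Df$ under small affine deformations --- a filling-in of detail rather than a different method.
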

\begin{proof}The proof is a simple calculation: 
\begin{align*}
&\frac{1}{h_i}\bigg(\intav_{B_i}f(y)\,dy\,-\intav_{B}f(y)\,dy\,\bigg)
=\frac{1}{h_i}\bigg(\intav_{L_i(B)}f(y)\,dy\,-\intav_{B}f(y)\,dy\,\bigg)\\
=&\frac{1}{h_i}\bigg(\intav_{B}f(L_i(y))-f(y)\,dy\,\bigg)
=\intav_{B}\frac{f(y+(L_i(y)-y))-f(y)}{h_i}\,dy\,\\
\approx&\intav_{B}\frac{D f(y)\cdot(L_i(y)-y)}{h_i}\,dy\,\to\intav_{B}D f(y)\cdot g(y)\,dy\,,
\end{align*}
if $i\to \infty\,$. 
\end{proof}
\begin{lemma}\label{aux2}
Let $f\in W^{1,1}(\rn)$, $x\in\rn$, $B\in\mathcal{B}_x$, $\delta>0$, and let $L_h$, $h\in[-\delta,\delta]$, be \textit{affine} mappings such that 
$x\in L_h(\bar{B})$ and
\begin{equation}\label{assump}
\lim_{h\to 0}\frac{L_h(y)-y}{h}\,=\,g(y)\,.
\end{equation}
Then
\begin{equation}\label{sec}
\int_{B} D|f|(y)\cdot g(y)\,dy\,=0\,.
\end{equation}
\end{lemma}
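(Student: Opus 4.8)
The plan is to exploit the maximality of $B$ through a one-variable maximum principle applied to the average as the ball is deformed. Set $\phi(h):=\intav_{L_h(B)}|f(y)|\,dy$ for $h\in[-\delta,\delta]$. First I would observe that, since $f\in W^{1,1}(\rn)$ implies $|f|\in W^{1,1}(\rn)$, Proposition \ref{aux1} applies with $f$ replaced by $|f|$ and with $B_i=L_{h_i}(B)$: for every sequence $h_i\to 0$ one obtains
\begin{equation*}
\lim_{i\to\infty}\frac{\phi(h_i)-\phi(0)}{h_i}=\intav_{B}D|f|(y)\cdot g(y)\,dy\,.
\end{equation*}
Since the right-hand side is independent of the chosen sequence, this shows that $\phi$ is differentiable at $0$ with $\phi'(0)=\intav_{B}D|f|\cdot g$.

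The second ingredient is that $h=0$ is a maximum point of $\phi$. Because $B\in\mathcal{B}_x$, we have $\phi(0)=\intav_{B}|f|=Mf(x)$. For each $h\in[-\delta,\delta]$ the region $L_h(B)$ is a ball that contains $x$ in its closure, by the hypothesis $x\in L_h(\bar{B})$; hence it is an admissible competitor in the supremum defining $Mf(x)$, and therefore $\phi(h)\le Mf(x)=\phi(0)$. Thus $\phi(h)\le\phi(0)$ for all $h\in[-\delta,\delta]$.

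It then remains to combine the two facts. A function differentiable at an interior maximum has vanishing derivative there: inspecting the difference quotient from the right gives $\phi'(0)\le 0$, while from the left it gives $\phi'(0)\ge 0$, so $\phi'(0)=0$. Consequently $\intav_{B}D|f|\cdot g=0$, and multiplying by $|B|>0$ yields $\int_{B}D|f|(y)\cdot g(y)\,dy=0$, which is \eqref{sec}.

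The step I expect to require the most care is the admissibility comparison $\phi(h)\le Mf(x)$: it is essential that each deformed region $L_h(B)$ remains a ball containing $x$, so that the maximality of $B$ among balls through $x$ can actually be invoked. This is exactly where the hypothesis $x\in L_h(\bar{B})$ enters, and it is the only place where $B\in\mathcal{B}_x$ is used; the differentiability supplied by Proposition \ref{aux1} and the elementary maximum-principle conclusion are then routine.
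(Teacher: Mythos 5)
Your proposal is correct and follows essentially the same route as the paper: both apply Proposition \ref{aux1} to $|f|$ with $B_h=L_h(B)$ and then exploit that $\intav_{B_h}|f|\leq Mf(x)=\intav_{B}|f|$ (since $x\in L_h(\bar{B})$ makes $B_h$ an admissible competitor), the paper phrasing the conclusion via the sign change of $1/h$ and you phrasing it as Fermat's rule at an interior maximum, which is the same argument. The only point worth noting is that both proofs tacitly use that each $L_h(B)$ is a ball (true in all of the paper's applications, where the $L_h$ are similarities, though not for arbitrary affine maps), so your proof is faithful to the paper's.
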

\begin{proof}
Let us denote $B_h:=L_h(B)$.
 By Proposition \ref{aux1} it holds that
 \begin{equation*}
 \intav_{B} D|f|(y)\cdot g(y)\,dy\,=\,\lim_{h\to 0}\frac{1}{h}\bigg(\intav_{B_h}|f|(y)-\intav_{B}|f|(y)\bigg)\,.
 \end{equation*}
Since $B\in\mathcal{B}_x$ and $x\in \bar{B}_h$, the sign of the quantity inside the large parentheses is non-positive for all $h\in[-\delta,\delta]$. 
However, the sign of $1/h$ depends on the sign of
$h$. The conclusion is that the above equality is possible only if (\ref{sec}) is valid.
\end{proof}

\subsection*{Proof of Lemma \ref{peruskama}}
\begin{enumerate}
 \item The claim is counterpart for the formula for $DM_cf$, which was first time proved in \cite{L}.
 Suppose that $B=B(z,r)\in \mathcal{B}_x$ and let $B_h:=B(z+hv,r)$. Then it holds that
 \begin{align*}
  &D_vMf(x)=\lim_{h\to 0}\frac{1}{h}(Mf(x+hv)-Mf(x))\\
  \geq& \lim_{h\to 0}\frac{1}{h}\bigg(\intav_{B_h}|f(y)|\,dy\,-\intav_{B}|f(y)|\,dy\,\bigg)\\
  =&\lim_{h\to 0}\frac{1}{h}\bigg(\intav_{B}|f(y+hv)|-|f(y)|\,dy\,\bigg)\,=\,\intav_{B_h}D_v|f|(y)\,dy\,.
 \end{align*}
On the other hand, if $B_h:= B(z-hv,r)$, then
\begin{align*}
  &D_vMf(x)=\lim_{h\to 0}\frac{1}{h}(Mf(x)-Mf(x-hv))\\
  \leq&\lim_{h\to 0}\frac{1}{h}\bigg(\intav_{B}|f(y)|\,dy\,-\intav_{B_h}|f(y)|\,dy\,\bigg)\\
  =&\lim_{h\to 0}\frac{1}{h}\bigg(\intav_{B}|f(y)|-|f(y+hv)|\,dy\,\bigg)\,=\,\intav_{B_h}D_v|f|(y)\,dy\,.
 \end{align*}
 These inequalities imply the claim.
\item If $B\in\mathcal{B}_x$ and $x\in B$, then $y\in B$ if $|y-x|$ is small enough, and thus $Mf(y)\geq Mf(x)$.
\item Let  $B=B(z,r)\in\mathcal{B}_x$, $v\in S^{n-1}$ such that $v\cdot(z-x)=0$, and  $h_i\in(0,\infty)$, $h_i\to 0$ as $i\to\infty\,$. Moreover, let us denote 
$B_i:=B(z,|z-(x+h_iv)|)$. Then it clearly holds that $x+h_iv\in \bar{B}_i$ and it is also 
easy to see that $B_i=L_i(B)$ for an affine mapping $L_i$ given by 
\begin{equation*}
 L_i(y)=y+\bigg(\frac{|z-(x+h_iv)|-|z-x|}{|z-x|}\bigg)(y-z)\,.
\end{equation*}
By the assumption $v\cdot (z-x)=0$ it follows that
\begin{equation*}
 \lim_{i\to \infty}\frac{L_i(y)-y}{h_i}\,=\,(y-z)\lim_{i\to \infty}\bigg(\frac{|z-(x+h_iv)|-|z-x|}{|z-x|}\,\bigg)\,=\,0\,.
\end{equation*}
Therefore, Proposition \ref{aux1} implies that
\begin{equation*}
\lim_{i\to \infty}\frac{1}{h_i}\bigg(\intav_{B_i}|f|(y)\,dy\,-\intav_{B}|f|(y)\,dy\,\bigg)\,=\,0\,.
\end{equation*}
This shows that $D_{v}Mf(x)=0$ for all $v$ orthogonal to $(z-x)$. In particular, it follows that $DMf(x)$ is parallel to $z-x$ or $x-z$. The final claim follows easily 
by the fact that $Mf(x+h(z-x))\geq Mf(x)$ if $0<h\leq 2$.
\item Let $B\in\mathcal{B}_x$ and $L_h(y):=y+h(y-x)\,$, $h\in\re$. Then it holds that $L_h$ is affine mapping, $L_h(x)=x$, and so $x\in L_h(B)=:B_h$, and
$(L_h(y)-y)/h=y-x\,$ for all $h\in\re\,$. Therefore, Lemma \ref{aux2} implies that
\begin{equation*}
\int_{B} D|f|(y)\cdot (y-x)\,dy\,=0\,.
\end{equation*}
\item By combining $(1)$, $(3)$ and $(4)$ the claim follows by 
\begin{align*}
|DMf(x)|&=DMf(x)\cdot\bigg(\frac{z-x}{|z-x|}\bigg)=\intav_{B} D|f|(y)\cdot\bigg(\frac{z-x}{|z-x|}\bigg)\,dy\,\\
&=\intav_{B} D|f|(y)\cdot\bigg(\frac{z-y}{|z-x|}\bigg)\,dy\,.
\end{align*}
\item The claim follows from $(1)$ and $(4)$.
\end{enumerate}
\hfill$\Box$

\section{$W^{1,1}$-problem for radial functions}
\subsection*{Radial functions and notation}
In what follows, we will interpret a radial function on $\rn$ as a function on $(0,\infty)$ in a natural way. To be more precise, if $f\in W^{1,1}_{loc}(\rn)$ is radial,
it is well known fact that  
there exists continuous function $\tilde{f}:(0,\infty)\to\re$ 
such that $\tilde{f}$ is weakly differentiable,
\begin{equation*}
\int_{0}^{\infty}|\tilde{f}'(t)|t^{n-1}\,dt\,<\infty\,,
\end{equation*}
and (by a possible redefinition of $f$ in a set of measure zero) for all $t\in(0,\infty)$  it holds that
$f(x)=\tilde{f}(t)$ and 
$D_{x/|x|}f(x)=\tilde{f}'(t)$
if $|x|=t$. In what follows, we will simplify the notation and use $f$ to denote $\tilde{f}$ as well. To avoid the possibility of misuderstanding, 
we usually use variable $t$ and notation $f'$ (instead of $Df$) when 
we are actually working with $\tilde{f}$. We also say that $f$ is radially decreasing if $f$ is radial and $f(t_1)\leq f(t_2)$ if $t_1>t_2$.
Notice also that if $f$ is radial then $Mf$ is also radial.

The following result is an easy consequence of Lemma \ref{peruskama}. 
\begin{corollary}\label{cor1}
 If $f\in W^{1,1}(\rn)$ is radially decreasing, then $DMf\in W^{1,1}(\rn)$ and $\norm{DMf}_1\leq C_n\norm{Df}_1\,$.
\end{corollary}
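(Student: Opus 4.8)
The plan is to reduce the statement to the pointwise bound (\ref{ref1}) and then integrate it by Fubini's theorem. Since $Mf=M|f|$ and $|D|f||=|Df|$ almost everywhere, I may assume $f\ge 0$, so that writing $f(x)=g(|x|)$ with $g$ decreasing gives $D|f|(y)=g'(|y|)\,y/|y|$ with $g'\le 0$ and $|Df(y)|=|g'(|y|)|$. As $f$ is radial, $Mf$ is radial, so wherever $Mf$ is differentiable its gradient is parallel to $x$ and $|DMf(x)|=|DMf(x)\cdot x/|x||$. Fix such an $x\ne 0$ with $Mf(x)>f(x)$ and pick $B=B(z,r)\in\mathcal B_x$. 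If $x$ is interior to $B$ then $DMf(x)=0$ by part (2) of Lemma \ref{peruskama}, so I may assume $x\in\partial B$. Then (\ref{nice}), combined with $D|f|(y)\cdot y=g'(|y|)|y|\le 0$, yields the clean identity
\[
|DMf(x)|=\frac{1}{|x|}\,\frac{1}{\omega_n r^n}\int_{B}|g'(|y|)|\,|y|\,dy ,
\]
the sign of the integrand letting me drop the absolute value outside the integral.

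It remains to control the \emph{position} and the \emph{size} of $B$. The next step is a geometric reduction: I claim one may always select $B=B(z,r)\in\mathcal B_x$ with center on the segment $[0,x]$ and $x$ as its farthest point from the origin, so that $B\subseteq B(0,|x|)$, and with $|x|/2<r\le|x|$. The upper bound is soft: if some optimal ball had $r\ge|x|$, then by the Riesz rearrangement inequality $\intav_{B(z,r)}f\le\intav_{B(0,r)}f\le\intav_{B(0,|x|)}f$, while $B(0,|x|)$ is itself admissible ($x\in\bar B(0,|x|)$), forcing equality and hence $B(0,|x|)\in\mathcal B_x$; so I may take $r\le|x|$. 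The alignment follows by symmetrizing the competitor ball about the ray through $x$, which can only increase the average of the radially decreasing $f$, so that $x$ becomes the farthest point and $B\subseteq B(0,|x|)$.

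The heart of the matter, and the step I expect to be the main obstacle, is the lower bound $r>|x|/2$, i.e.\ that the optimal ball contains the origin. The idea is to study the nested, increasing one-parameter family of aligned balls $B_\rho=B\!\big((|x|-\rho)\,x/|x|,\rho\big)$, $0<\rho\le|x|$, each having $x$ on its boundary, and to prove that $\rho\mapsto\intav_{B_\rho}f$ is non-decreasing on $(0,|x|/2]$. Differentiating, this reduces to showing that the $f$-average over the shell added as $\rho$ grows dominates the $f$-average over $B_\rho$; because the expansion pivots about the fixed far point $x$, the added shell concentrates on the side of $B_\rho$ nearest the origin, precisely where the radially decreasing $f$ is largest, and a comparison of the radial distributions of the shell and of the whole ball gives the inequality. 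This is essentially the first-order optimality encoded in (\ref{char}). Hence the maximum of the average over the family is attained at some $r>|x|/2$, giving $|B|=\omega_n r^n\ge\omega_n(|x|/2)^n$; making the constant dimensional here is the delicate point.

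Granting this, the identity above and $B\subseteq B(0,|x|)$, $r>|x|/2$ give
\[
|DMf(x)|\le\frac{1}{|x|}\,\frac{1}{\omega_n(|x|/2)^n}\int_{B(0,|x|)}|Df(y)|\,|y|\,dy=\frac{2^n}{|x|}\intav_{B(0,|x|)}|Df(y)|\,|y|\,dy ,
\]
which is exactly (\ref{ref1}). Integrating over $\rn$ and interchanging the order of integration (for fixed $y$ the variable $x$ ranges over $\{|x|>|y|\}$, where $\int_{\{|x|>|y|\}}|x|^{-n-1}\,dx=\sigma_n/|y|$) converts the weight $|y|$ into a constant and yields $\norm{DMf}_1\le C_n\norm{Df}_1$; on the coincidence set $\{Mf=f\}$ one simply uses $DMf=Df$ almost everywhere. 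The same estimate shows $DMf\in L^1$, and together with the radial monotonicity of $Mf$, which makes it absolutely continuous on rays, this gives $Mf\in W^{1,1}(\rn)$.
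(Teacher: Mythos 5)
Your overall strategy is the same as the paper's: establish the pointwise bound (\ref{ref1}) from Lemma \ref{peruskama}(6) together with geometric control of an optimal ball (contained in $\bar B(0,|x|)$, volume at least $\omega_n(|x|/2)^n$), then integrate by Fubini. Your sign identity, the alignment step, the upper bound $r\le|x|$, and the final Fubini computation are all sound, and the paper itself dispatches the geometry with ``easy to show, left to the reader.'' But your write-up has a genuine gap exactly at the step you flag: the lower bound $r\ge|x|/2$. You reduce it to the claim that $\rho\mapsto\intav_{B_\rho}f$ is non-decreasing on $(0,|x|/2]$, and then support that claim only with the heuristic that the added shell sits where the radially decreasing $f$ is largest. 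That heuristic cannot be upgraded to a proof by soft arguments. Differentiating, your claim is equivalent to
\begin{equation*}
\frac{1}{\sigma_n\rho^{n-1}}\int_{\partial B_\rho}f(y)\,\big(1-e\cdot\omega(y)\big)\,d\mathcal{H}^{n-1}(y)\;\ge\;\intav_{B_\rho}f\,,\qquad e=x/|x|\,,
\end{equation*}
where $\omega(y)$ is the outward unit normal and $1-e\cdot\omega(y)$ is the normal speed of the pivoting expansion. The weight here is indispensable and quantitative: the unweighted statement (sphere average dominates ball average) is \emph{false} in this very regime --- for $n=2$, $f=\chi_{B(0,1)}$, $|x|=2$, $\rho=1=|x|/2$, the sphere average is $1/3$ while the ball average is $\tfrac23-\tfrac{\sqrt3}{2\pi}\approx 0.39$ --- so the ``shell is where $f$ is largest'' mechanism, which via a correlation inequality only compares weighted against unweighted boundary averages, does not reach the ball average. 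Worse, testing with $f=\chi_{B(0,s)}$, $s\uparrow|x|$, shows the weighted inequality itself fails whenever $\rho>\tfrac{n+1}{2n}|x|$, a threshold that tends to $|x|/2$ as $n\to\infty$; so the monotonicity you need is essentially sharp and must be proved by a genuine computation, not asserted. As it stands, the heart of your geometric reduction is missing.

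The gap can be closed with a tool you already cite, namely (\ref{char}), applied at the optimum rather than along the whole family. After your alignment step, suppose $B=B\big((|x|-r)e,r\big)\in\mathcal{B}_x$ with $x\in\partial B$, $r<|x|/2$, at a point where $Mf(x)>f(x)$. Then $\bar B$ lies inside the ball with diameter $[0,x]$, on which $y\cdot(y-x)=|y-x/2|^2-|x|^2/4\le 0$. Since $f\ge 0$ (a radially decreasing $W^{1,1}$ function is nonnegative) and $f'\le 0$, the integrand $D|f|(y)\cdot(y-x)=f'(|y|)\,y\cdot(y-x)/|y|$ is nonnegative a.e.\ on $B$, so (\ref{char}) forces it to vanish a.e.; as $y\cdot(y-x)<0$ a.e.\ on $B$, this gives $f'(|y|)=0$ a.e.\ on $B$, hence $f'=0$ a.e.\ on $(|x|-2r,|x|)$ by polar coordinates. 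Then $f\equiv f(x)$ on the annulus containing $B$, so $\intav_B f=f(x)<Mf(x)$, contradicting $B\in\mathcal{B}_x$. This yields $r\ge|x|/2$ and completes your argument. One further small repair: ``$Mf$ is monotone on rays, hence absolutely continuous'' is not a valid inference (monotone continuous functions need not be absolutely continuous); the membership $Mf\in W^{1,1}(\rn)$ should instead be obtained, as in the paper, from the local Lipschitz continuity of $Mf$ on $\{Mf>f\}$ combined with Lemma \ref{basic}.
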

\begin{proof}
Since $f$ is radially decreasing, it is easy to show (the rigorous proof is left to the reader) that if $Mf(x)\not = 0$ and $B\in \mathcal{B}_x$, then 
$0\in \bar{B}$ and $\bar{B}\subset \bar{B}(0,|x|)$. Especially, we get by Lemma \ref{peruskama}, $(6)$, that
\begin{equation}\label{ref2}
|DMf(x)|\leq \frac{C_n}{|x|}\intav_{B(0,|x|)}|Df(y)||y|\,dy\,.
\end{equation}
Then the claim follows by Fubini theorem:
\begin{align*}
 &\int_{\rn}\bigg(\frac{1}{|x|}\intav_{B(0,|x|)}| Df(y)||y|\,dy\,\bigg)\,dx\\
 =&\int_{\rn}|D f(y)||y|\bigg(\int_{\rn}\frac{\chi_{B(0,|x|)}(y)}{\omega_n|x|^{n+1}}\,dx\,\bigg)\,dy\\
 =&\int_{\rn}|D f(y)||y|\bigg(\int_{\{x:|x|\geq |y|\}}\frac{1}{\omega_n|x|^{n+1}}\,dx\,\bigg)\,dy\\
 =&\int_{\rn}|D f(y)||y|\bigg(\int_{S^{n-1}}\int_{|y|}^{\infty}\frac{1}{\omega_nt^{n+1}}t^{n-1}\,dt\,d\mathcal{H}^{n-1}\bigg)\,dy\,\\
 =&\frac{\sigma_n}{\omega_n}\int_{\rn}|D f(y)||y|\bigg(\int_{|y|}^{\infty}\frac{1}{t^2}\,dt\bigg)\,dy\,\\
 =&\frac{\sigma_n}{\omega_n}\int_{\rn}|D f(y)|\,dy\,.\\
\end{align*}
\end{proof}
In the case of general radial functions, (\ref{ref1}) is in general valid (and useful) only for those
$x$ for which the radius of $B\in\mathcal{B}_x$ is comparable 
to $|x|$. As it was explained in the introduction, the main auxiliary tool in the case of general radial functions is the following result 
(recall the definition of $M^I$ in the introduction):
\begin{lemma}\label{basic3}
If $f\in W^{1,1}(\rn)$ is radial, then 
$M^{I}f\in W^{1,1}(\rn)$ and \\ $\norm{DM^{I}f}_1\leq C_n\norm{Df}_1\,$.
\end{lemma}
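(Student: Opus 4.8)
The plan is to follow the three-step strategy sketched in the introduction. Throughout I pass to the radial picture, writing $g(t):=M^{I}f$ as a function of $t=|x|$, so that
\begin{equation*}
\norm{DM^{I}f}_{1}=\sigma_{n}\int_{0}^{\infty}|g'(t)|\,t^{n-1}\,dt\,,
\end{equation*}
and I set $\varphi:=\max\{|f|,f_{/4}\}$, regarded likewise as a radial function. The three ingredients to assemble are: (a) the endpoint bound $\norm{Df_{/4}}_{1}\le C_{n}\norm{Df}_{1}$; (b) the structural fact that $g$ has no local maximum on $\{g>\varphi\}$; and (c) a conversion of (b) into a dyadic variation estimate that sums to the claim.

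First I would treat the endpoint operator $f_{/4}$ from (\ref{def99}). For fixed $x$ the competing balls $B(z,|x|/4)$ have a fixed radius and a free center $z$ ranging over $\bar B(x,|x|/4)$; applying Proposition \ref{aux1} to translations $L_{h}(y)=y+hv$ that keep $x$ inside the ball shows, exactly as in Lemma \ref{aux2} and (\ref{char}), that the optimal ball $B=B(z,|x|/4)$ satisfies $\int_{B}D|f|(y)\cdot(y-x)\,dy=0$. Differentiating $f_{/4}$ at $x$ by means of the affine maps that simultaneously translate the center and rescale the radius as $|x|/4$ varies (again via Proposition \ref{aux1}), and inserting this orthogonality, I expect a pointwise inequality of the type (\ref{ref1}),
\begin{equation*}
|Df_{/4}(x)|\le \frac{C_{n}}{|x|}\intav_{B(0,2|x|)}|Df(y)|\,|y|\,dy\,,
\end{equation*}
after which the Fubini computation of Corollary \ref{cor1} yields $\norm{Df_{/4}}_{1}\le C_{n}\norm{Df}_{1}$.

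Next I would establish (b). Suppose $M^{I}f(x_{0})>\varphi(x_{0})$ and let $B=B(z,r)$ be an optimal admissible ball. Since the value exceeds $f_{/4}(x_{0})$ the constraint is inactive, so $r<|x_{0}|/4$ strictly; since it exceeds $|f(x_{0})|$ the radius is positive. Then $r<|y|/4$ and $y\in\bar B$ for every $y$ near $x_{0}$, so $B$ remains admissible in the definition of $M^{I}$ at such $y$, whence $M^{I}f(y)\ge\intav_{B}|f|=M^{I}f(x_{0})$ on a whole neighbourhood intersected with $\bar B$; combined with the radial monotonicity forced by this inequality, this rules out a local maximum at $x_{0}$.

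Finally, for (c) I would run the one-dimensional obstacle argument on each dyadic annulus. On $[2^{k},2^{k+1}]$ the radial function $g$ satisfies $g\ge\varphi$ and, by (b), every interior local maximum of $g$ occurs where $g=\varphi$; hence each ascending excursion of $g$ from a valley $v$ to a peak $p$ obeys $g(p)-g(v)=\varphi(p)-g(v)\le\varphi(p)-\varphi(v)$, so the positive variation of $g$ is dominated by that of $\varphi$ up to the endpoint defects $|g-\varphi|$ at $2^{k}$ and $2^{k+1}$. Passing to the weighted variation and using $t^{n-1}\approx 2^{k(n-1)}$ on the annulus, this gives
\begin{equation*}
\int_{\{2^{k}\le|y|\le 2^{k+1}\}}|DM^{I}f|\,dy\le C_{n}\int_{\{2^{k-1}\le|y|\le 2^{k+2}\}}\big(\abs{D|f|}+\abs{Df_{/4}}\big)\,dy\,,
\end{equation*}
the enlarged annulus on the right absorbing the boundary defects through the overlap. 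Summing over $k\in\Z$ (a bounded-overlap sum) and invoking the endpoint estimate together with $\norm{D|f|}_{1}\le\norm{Df}_{1}$ produces $\norm{DM^{I}f}_{1}\le C_{n}\norm{Df}_{1}$. I expect step (c) to be the main obstacle: carrying out the obstacle variation estimate rigorously with the radial weight and, above all, controlling the boundary terms at the dyadic endpoints so that they absorb into the overlapping annuli is the delicate part, with the orthogonality-driven endpoint bound of step (a) a close second.
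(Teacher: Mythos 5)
Your three-step plan is the strategy the paper itself follows (endpoint bound for $f_{/4}$, a no-local-maximum principle on $\{M^{I}f>\max\{|f|,f_{/4}\}\}$, dyadic component-interval assembly), and your steps (a) and (c) are in substance the paper's Proposition \ref{basic2} and the proof of Lemma \ref{basic3}. The genuine gap is step (b). What step (c) requires is the absence of a \emph{local strict maximum} in the paper's sense, which allows plateaus: $M^{I}f\le M^{I}f(t_0)$ on an interval around $t_0$ together with strictly smaller values at its two endpoints; only this yields the valley shape of Proposition \ref{deejee}. Your neighbourhood argument cannot rule that out: from admissibility of the optimal ball $B$ near $x_0$ you only get $M^{I}f(y)\ge\intav_{B}|f|=M^{I}f(x_0)$ for $y$ near $x_0$ in $\bar{B}$, which, combined with ``local maximum'', merely says $M^{I}f$ is constant on part of a neighbourhood of $x_0$ --- exactly the plateau situation, fully consistent with $t_0=|x_0|$ being a local strict maximum; no ``radial monotonicity'' is forced by it. What is actually needed is the mechanism of Lemma \ref{simppeli}: introduce the plateau endpoints $t^{\pm}$, show the optimal ball must be trapped in $A[t^{-},t^{+}]$, because otherwise the \emph{same} ball is an admissible competitor at radii beyond $t^{\pm}$, and then derive a contradiction by averaging $|f|\le M^{I}f=M^{I}f(t_0)$ over $B$ against $|f|<M^{I}f(t_0)$ on a sub-ball. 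Moreover, for $M^{I}$ (as opposed to $M$) the trapping step has a loophole that your sketch never touches: $B=B(z,r)$ is admissible at radius $t$ only if $t\ge 4r$, and $B$ may cover radii below $4r$ (possible as soon as $r>|x_0|/6$), so at the inner end the same-ball competitor is unavailable and a separate argument is needed --- for instance, that $\bar{B}$ then meets the sphere of radius $4r$, whence $f_{/4}(4r)\ge\intav_{B}|f|=M^{I}f(4r)$, contradicting $M^{I}f>f_{/4}$ on the plateau. Nothing in your step (b) substitutes for these arguments.

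A secondary, repairable, flaw is in step (a): the orthogonality $\int_{B}D|f|(y)\cdot(y-x)\,dy=0$ is unjustified, and in general false, for the constrained operator $f_{/4}$. In Lemma \ref{peruskama}, identity (\ref{char}) comes from the dilations $L_h(y)=y+h(y-x)$, which change the radius of the ball and are therefore not admissible competitors for $f_{/4}$, whose balls must have radius exactly $|x|/4$; translations have constant derivative $g(y)=v$, so they could never produce the weight $(y-x)$, and when $x\in\partial B$ there is no two-sided family of translations keeping $x$ in the closed ball at all. Fortunately the orthogonality is also unnecessary: as in Proposition \ref{basic2}, differentiating along the maps $g_h$ gives $|Df_{/4}(x)|\le C_n\intav_{B}|D|f|(y)|\,dy$ outright, and since every competing ball has radius $|x|/4$, one has $|y|\ge|x|/2$ on $B$, so your weighted inequality and the Fubini step follow. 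With (a) repaired this way and (c) fleshed out via the component decomposition and the bound $M^{I}f(2^{-k})\le\sup_{A(2^{-k-1},2^{-k+2})}\max\{|f|,f_{/4}\}$, the only missing mathematics is step (b).
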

Before the actual proof of this result, we prove several auxiliary results.
The first of them is well known.
\begin{proposition}\label{union}
Suppose that $E\subset\re$ is open. Then there exist disjoint intervals $(a_i,b_i)$ such that 
$E=\cup_{i=1}^{\infty}(a_i,b_i)\,$ and $a_i,b_i\in\partial E\cup\{-\infty,\infty\}$ for all $i\in\N\,$.
\end{proposition}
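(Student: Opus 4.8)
The plan is to realize $E$ as the disjoint union of its connected components, each of which is necessarily a maximal open subinterval of $E$. Concretely, for each $x\in E$ I would set
\[
a_x:=\inf\{a\in\re\cup\{-\infty\}:(a,x]\subset E\},\qquad b_x:=\sup\{b\in\re\cup\{\infty\}:[x,b)\subset E\},
\]
and let $I_x:=(a_x,b_x)$. Since $E$ is open, both defining sets are nonempty, so $I_x$ is a well-defined open interval containing $x$.

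First I would check that $I_x\subset E$. For $y\in I_x$ with $y\le x$, the strict inequality $a_x<y$ together with the definition of the infimum yields some $a<y$ with $(a,x]\subset E$, whence $y\in E$; the case $y\ge x$ is symmetric. Next I would verify maximality and the endpoint condition: if $a_x$ were finite and belonged to $E$, then openness of $E$ would furnish $\eps>0$ with $(a_x-\eps,a_x+\eps)\subset E$, and the union of this set with $I_x$ would be an interval contained in $E$ that extends past $a_x$, contradicting the definition of $a_x$ as an infimum. Hence $a_x\notin E$; but $a_x$ is a limit of points of $I_x\subset E$, so $a_x\in\overline{E}$, and therefore $a_x\in\overline{E}\setminus E=\partial E$, the last equality being exactly where I use that $E$ is open. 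The same argument applies to $b_x$, and when either endpoint is infinite it lies in $\{-\infty,\infty\}$ by construction.

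Then I would show that the intervals $\{I_x:x\in E\}$ form a disjoint family. If $I_x\cap I_y\ne\emptyset$, then $I_x\cup I_y$ is an open interval contained in $E$ and containing both $x$ and $y$; by the maximality just established it must coincide with each of $I_x$ and $I_y$, so distinct members of the family are disjoint. Countability then follows by selecting a rational number in each $I_x$: since the intervals are pairwise disjoint, distinct intervals receive distinct rationals, so the family is at most countable, and I can enumerate its members as $(a_i,b_i)$, $i\in\N$ (padding with empty intervals if the family is finite).

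The argument is entirely standard, so I anticipate no serious obstacle. The only points demanding care are the passage from $a_x\notin E$ to $a_x\in\partial E$, where openness of $E$ is precisely what identifies $\partial E$ with $\overline{E}\setminus E$, and the verification that overlapping maximal intervals must coincide, which is what upgrades a non-nesting statement into genuine pairwise disjointness.
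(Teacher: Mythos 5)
Your proof is correct. The paper states this proposition without proof, citing it as well known, and your argument --- decomposing $E$ into the maximal open intervals $I_x$, using openness of $E$ to place each finite endpoint in $\overline{E}\setminus E=\partial E$, deducing pairwise disjointness from maximality, and extracting countability via a choice of rationals --- is exactly the standard argument that the paper's appeal to ``well known'' presupposes, so there is nothing to compare it against beyond noting that it is the expected proof.
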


The following auxiliary result is repeatedly utilized in the proof. The result is well known but we express the proof for readers convenience.
\begin{lemma}\label{basic}
Suppose that $\Omega\subset\rn$, $f\in W^{1,1}(\Omega)$ is continuous, $g:\Omega\to\re$ is continuous and weakly differentiable in 
$
E:=\{x\in\Omega:g(x)>f(x)\}\, 
$
, and $\int_{E}|Dg|<\infty\,$.
Then $\max\{f,g\}$ is weakly differentiable in $\Omega\,$ and 
\begin{equation*}
D(\max\{f,g\})=\chi_{E}Dg+\chi_{\Omega\cap E^c}Df\,.
\end{equation*}

\end{lemma}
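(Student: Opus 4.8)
The plan is to reduce the lemma to a statement about positive parts and then to prove that statement by a single smooth truncation that kills the boundary contribution on the level set $\{f=g\}$. Since weak differentiability is a local notion, it suffices to verify, for every $\varphi\in C_c^\infty(\Omega)$ and every coordinate direction $i$, the distributional identity
\[
\int_\Omega \max\{f,g\}\,\partial_i\varphi\,dx=-\int_\Omega\big(\chi_E\,\partial_i g+\chi_{\Omega\cap E^c}\,\partial_i f\big)\varphi\,dx\,,
\]
the right-hand integrand being in $L^1(\Omega)$ because $f\in W^{1,1}(\Omega)$ and $\int_E|Dg|<\infty$.

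First I would reduce. Put $h:=g-f$, so that $h\in C(\Omega)$, $h$ is weakly differentiable on the open set $E=\{h>0\}=\{g>f\}$ with $Dh=Dg-Df$ there, $\int_E|Dh|<\infty$, and $\max\{f,g\}=f+h^+$ with $h^+=\max\{h,0\}$. Since $f\in W^{1,1}(\Omega)$, a short computation shows the displayed identity is equivalent to the single claim
\[
Dh^+=\chi_E\,Dh\,,
\]
that is, to the assertion: if $h\in C(\Omega)$ is weakly differentiable on $E=\{h>0\}$ with $\int_E|Dh|<\infty$, then $h^+$ is weakly differentiable with weak gradient $\chi_E Dh$. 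This is the only thing that needs proof.

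Next comes the main step. I would fix a family $\Phi_\tau\in C^\infty(\re)$, $\tau>0$, with $\Phi_\tau(s)=0$ for $s\le\tau$, $0\le\Phi_\tau'\le1$, and $\Phi_\tau(s)=s-2\tau$ for $s\ge3\tau$, so that $\Phi_\tau(s)\to s^+$ and $\Phi_\tau'(s)\to\chi_{(0,\infty)}(s)$ as $\tau\to0^+$. Set $w_\tau:=\Phi_\tau(h)$. On the open set $E$ the chain rule for Sobolev functions (valid since $h\in W^{1,1}_{loc}(E)$ by continuity and $\Phi_\tau$ is smooth with bounded derivative) gives $w_\tau\in W^{1,1}_{loc}(E)$ with $\partial_i w_\tau=\Phi_\tau'(h)\,\partial_i h$, while on the open set $\{h<\tau\}$ we have $w_\tau\equiv0$. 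These two open sets cover $\Omega$, and the two formulas for $\partial_i w_\tau$ agree on the overlap $\{0<h<\tau\}$, where $\Phi_\tau'(h)=0$; gluing (locality again) makes $w_\tau$ weakly differentiable on all of $\Omega$ with $\partial_i w_\tau=\chi_E\,\Phi_\tau'(h)\,\partial_i h$, supported in $\{h>\tau\}$ and dominated by $\chi_E|\partial_i h|\in L^1(\Omega)$. Hence
\[
\int_\Omega w_\tau\,\partial_i\varphi\,dx=-\int_E \Phi_\tau'(h)\,\partial_i h\,\varphi\,dx\,.
\]
Finally I would let $\tau\to0^+$: on the left $w_\tau\to h^+$ pointwise with $|w_\tau|\le h^++1$ bounded on $\spt\varphi$, and on the right $\Phi_\tau'(h)\to\chi_E$ pointwise (note $\Phi_\tau'(h)=0$ wherever $h\le0$, in particular on $\{h=0\}$) with integrand dominated by $\chi_E|\partial_i h||\varphi|\in L^1$. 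Dominated convergence then yields $\int_\Omega h^+\partial_i\varphi=-\int_E\partial_i h\,\varphi$, which is exactly $Dh^+=\chi_E Dh$ and completes the reduced claim.

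The step I expect to be the genuine obstacle is the behaviour on the level set $\{h=0\}=\partial E\cap\Omega$, equivalently $\{f=g\}$: there neither $g$ nor $\max\{f,g\}$ need be differentiable, and a naive integration by parts on $E$ alone would leave an uncontrolled boundary term along $\partial E$. The smooth truncation $\Phi_\tau$ is precisely the device that suppresses this boundary contribution, since it uses only the \emph{continuity} of $h$ across $\partial E$ (which forces $h=0$, hence $\Phi_\tau'(h)=0$, on $\partial E$) and requires no regularity of $g$ near $\partial E$; everything else is dominated convergence.
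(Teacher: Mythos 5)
Your proof is correct, but it takes a genuinely different route from the paper's. The paper argues by one-dimensional slicing: it fixes a coordinate direction and a line $L$, decomposes $E\cap L$ into countably many disjoint open intervals $L((a_j,b_j))$ whose finite endpoints lie on $\partial E$ (Proposition \ref{union}), and integrates by parts on each interval; the key step is that continuity forces $f=g$ at those endpoints, so the boundary terms $g\phi$ can be replaced by $f\phi$, after which the integrals reassemble into the distributional identity for $\max\{f,g\}$ line by line. You instead reduce the lemma to the single claim $Dh^{+}=\chi_{E}Dh$ for $h=g-f$ and prove it globally: the smooth truncations $\Phi_\tau$, which vanish identically near the level set $\{h=0\}$, let you glue the chain-rule derivative on $E$ with the zero derivative on the open set $\{h<\tau\}$, and then pass to the limit by dominated convergence. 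What the paper's route buys is elementarity --- nothing beyond the fundamental theorem of calculus on intervals --- at the price of the interval bookkeeping and an implicit use of the correspondence between weak derivatives in $\rn$ and derivatives along almost every line; your route is coordinate-free, dispenses with any decomposition of $E$, and handles the delicate set $\{f=g\}$ automatically because $\Phi_\tau'(h)=0$ in a neighborhood of it, at the price of invoking two standard Sobolev facts (the chain rule for post-composition with a smooth function having bounded derivative, and the locality/gluing property of weak derivatives). Both arguments use the hypotheses in the same way: continuity (for the paper, to get $f=g$ on $\partial E$; for you, to get openness of $E$ and $\{h<\tau\}$ and local integrability of $h^{+}$), and $\int_{E}|Dg|<\infty$ (for absolute convergence of the sum over intervals, respectively for the $L^{1}$ domination in the limit $\tau\to 0^{+}$).
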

\begin{proof}
Suppose that $\phi$ is a smooth test function, compactly supported in $\Omega$, $1\leq i\leq n$, $L(t)=p+te_i$, $p\in\rn\,$, and let $L$ denote the line $L(\re)$. 
By Proposition \ref{union}, $E\cap L$ can be written as a union of disjoint and open (in $\Omega\cap L$) line segments $E_j=L((a_j,b_j))$, $j\in\N$, such that 
$L(a_j),L(b_j)\in \partial E$ (with respect to $\Omega\cap L$) or $a_j=-\infty$ or $b_j=\infty$. 
In particular, $f(L(a_j))=g(L(a_j))$ if $a_j\not=-\infty$ and $f(L(b_j))=g(L(b_j))$ if $b_j\not=\infty$. Since $\phi$ is compactly supported, it follows that
\begin{align*}
&f(L(a_j))\phi(L(a_j))=g(L(a_j))\phi(L(a_j))\,\text{ and }\\
&f(L(b_j))\phi(L(b_j))=g(L(b_j))\phi(L(b_j))\,\text{ for all }j\in\N\,.
\end{align*}
Therefore, by using the assumptions for $g$, it holds that
\begin{align*}
&\int_{E_j}g(D_i\phi)d\ha^1=\int_{E_j}D_i(g\phi)d\ha^1-\int_{E_j}(D_ig)\phi d\ha^1\,\\
=&\,g(L(b_j))\phi(L(b_j))-g(L(a_j))\phi(L(a_j))-\int_{E_j}(D_ig)\phi d\ha^1\\
=&\,f(L(b_j))\phi(L(b_j))-f(L(a_j))\phi(L(a_j))-\int_{E_j}(D_ig)\phi d\ha^1\\
=&\,\int_{E_j}D_i(f\phi)d\ha^1-\int_{E_j}(D_ig)\phi d\ha^1\\
=&\,\int_{E_j}(D_if)\phi + f(D_i\phi)-(D_ig)\phi\, d\ha^1\,
\end{align*}
for all $j\in\N\,$. Then
\begin{align*}
&\int_{\Omega\cap L}\max\{f,g\}(D_i\phi)d\ha^1=\int_{E\cap L}g(D_i\phi)d\ha^1+\int_{\Omega\cap E^c\cap L}f(D_i\phi)d\ha^1\\
=&\sum_{j=1}^{\infty}\int_{E_j}g(D_i\phi)d\ha^1+\int_{\Omega\cap E^c\cap L}f(D_i\phi)d\ha^1\\
=&\int_{E\cap L}(D_if)\phi +f(D_i\phi)-(D_ig)\phi\, d\ha^1
+\int_{\Omega\cap E^c\cap L}f(D_i\phi)\,d\ha^1\\
=&\int_{\Omega\cap L}f(D_i\phi)\,d\ha^1+\int_{E\cap L}(D_if)\phi\,d\ha^1-\int_{E\cap L}(D_ig)\phi\, d\ha^1\\
=&-\int_{\Omega\cap L}(D_i f)\phi\,d\ha^1+\int_{E\cap L}(D_if)\phi\,d\ha^1-\int_{E\cap L}(D_ig)\phi\, d\ha^1\\
=&-\int_{\Omega\cap\E^c\cap L}(D_i f)\phi\,d\ha^1-\int_{E\cap L}(D_ig)\phi\, d\ha^1\,\\
=&-\int_{\Omega\cap L}\big(\chi_{E}D_ig+\chi_{\Omega\cap E^c}D_if\big)\phi\,d\ha^1\,.
\end{align*}
This implies the claim.
\end{proof}

\begin{definition}
Let $f:\Omega\to\re$, where $\Omega\subset\re$ is open. We say that $x$ is a \text{local strict maximum} of $f$ in $(a,b)\subset\Omega$, $-\infty\leq a<b\leq\infty$, if there exist $a',b'\in(a,b)$ such that 
$a'<x<b'$, $f(t)\leq f(x)$ if $t\in (a',b')$, and $\max\{f(a'),f(b')\}<f(x)$. 
\end{definition}
\begin{proposition}\label{lation}
 Suppose that $f:[a,b]\to\re$ is continuous and $c\in (a,b)$ such that $f(c)>\max\{f(a),f(b)\}$. Then $f$ has a local strict maximum on $(a,c)$. 
\end{proposition}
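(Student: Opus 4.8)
The plan is to obtain the desired point as a \emph{global} maximizer of $f$ over a compact interval, the key being that the hypothesis $f(c)>\max\{f(a),f(b)\}$ is exactly what forces such a maximizer into the interior, away from the endpoints where the value is strictly smaller. Beyond the extreme value theorem the only real work is to upgrade ``global maximum'' to ``local \emph{strict} maximum'' in the sense of the preceding definition.

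Concretely, I would first apply the extreme value theorem to the continuous function $f$ on the compact interval $[a,c]$, obtaining $M:=\max_{[a,c]}f$, attained at some point. In the principal case $M>f(c)$ one has $M>f(a)$ and $M>f(c)$, so no maximizer can lie at either endpoint of $[a,c]$; hence the maximizing set $S:=\{t\in[a,c]:f(t)=M\}$ is a nonempty closed subset of the open interval $(a,c)$. Fix any $x_0\in S$. To produce the comparison points demanded by the definition I would use continuity at the endpoints: since $f(a)<M$ there is $a'\in(a,x_0)$ with $f(a')<M$, and since $f(c)<M$ there is $b'\in(x_0,c)$ with $f(b')<M$. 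Then $a'<x_0<b'$, every $t\in(a',b')\subset[a,c]$ satisfies $f(t)\le M=f(x_0)$, and $\max\{f(a'),f(b')\}<M=f(x_0)$; this is precisely the assertion that $x_0$ is a local strict maximum on $(a,c)$.

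The step I expect to be the main obstacle is strictness in the presence of \emph{plateaus}, together with the borderline case $M=f(c)$. If $f$ equals $M$ on a whole neighbourhood of $x_0$, the comparison points cannot be taken arbitrarily close to $x_0$; the remedy, already built into the argument above, is to draw $a',b'$ near the endpoints---where the hypothesis supplies the strict inequalities $f(a)<M$ and $f(c)<M$---rather than near $x_0$, so that $f\le f(x_0)$ on $(a',b')$ holds for free because $M$ is a global maximum on $[a,c]$. The genuinely delicate situation is $M=f(c)$, i.e. when $f$ attains its maximum over $[a,c]$ only at the right endpoint; here the left interval alone carries no interior strict maximum, and this is the point at which the second half of the hypothesis, $f(b)<f(c)$, must enter. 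I would expect to resolve it by passing to the restriction of $f$ to the full interval $[a,b]$, whose maximum $M^{\ast}\ge f(c)>\max\{f(a),f(b)\}$ is necessarily attained at an interior point of $(a,b)$, and then running the same flanking construction with comparison points drawn near $a$ and near $b$ to produce the local strict maximum.
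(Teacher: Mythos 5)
Your proof is correct in substance, and its core mechanism is exactly the paper's: the paper's one-line proof takes a global maximum point of $f$ (the hypothesis $f(c)>\max\{f(a),f(b)\}$ forces every such point into the interior) and observes that it is a local strict maximum, with the flanking points $a',b'$ drawn near the endpoints by continuity --- precisely your construction.

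The point worth making explicit is your ``delicate case'' $M=f(c)$, where $M=\max_{[a,c]}f$. You are right that this case cannot be resolved inside $[a,c]$: if $f$ is strictly increasing on $[a,c]$ and then decreases on $[c,b]$ with $f(b)<f(c)$, the hypothesis holds, yet no point of $(a,c)$ is a local strict maximum in $(a,c)$, because every such point has strictly larger values immediately to its right. So the conclusion as literally printed (``on $(a,c)$'') is false, and your fallback to the full interval --- which yields a local strict maximum on $(a,b)$ only, possibly located in $[c,b)$ --- is not a gap in your argument but the necessary correction of the statement. It is also exactly what the paper's own proof delivers (it maximizes over all of $[a,b]$), and this weaker ``$(a,b)$'' form is all that is ever used: in the proof of Proposition \ref{deejee}, the proposition is invoked to produce a strict local maximum on the full open interval to which it is applied. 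Compared with the paper, your two-case structure is slightly longer, but it buys the stronger conclusion (a maximizer located in $(a,c)$, with witnesses in $(a,c)$) whenever $\max_{[a,c]}f>f(c)$, and it pinpoints exactly where the printed ``$(a,c)$'' must be read as ``$(a,b)$''.
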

\begin{proof}
It is easy to see that now any maximum point $c$ ($f(c)=\max f$), which is known to exist, is also a local strict maximum of $f$.
\end{proof}
\begin{proposition}\label{deejee}
Suppose that $f:[a,b]\to\re$ is continuous and does not have a local strict maximum on $(a,b)$. Then there exists $c\in[a,b]$ such that 
 $f$ is non-increasing on $[a,c]$ and non-decreasing on $[c,b]$. 
\end{proposition}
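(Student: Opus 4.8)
The plan is to take $c$ to be a point where $f$ attains its minimum on $[a,b]$, which exists since $f$ is continuous and $[a,b]$ is compact, and then to show that the absence of a local strict maximum forces monotonicity on each side of $c$. The whole argument rests on Proposition \ref{lation}: whenever an interior point of a subinterval carries a strictly larger value than both of that subinterval's endpoints, a local strict maximum must appear, contradicting the hypothesis.

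First I would prove that $f$ is non-decreasing on $[c,b]$. Suppose not; then there are points $s,t$ with $c\le s<t\le b$ and $f(s)>f(t)$. Since $f(c)=\min_{[a,b]}f$, we have $f(c)\le f(t)<f(s)$, so in particular $f(c)<f(s)$ and hence $s>c$. Thus $s$ lies strictly inside the subinterval $[c,t]$ and satisfies $f(s)>\max\{f(c),f(t)\}$. Applying Proposition \ref{lation} to the restriction $f|_{[c,t]}$ then produces a local strict maximum of $f$ on $(c,s)\subseteq(a,b)$, contradicting the assumption. Therefore $f$ is non-decreasing on $[c,b]$.

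The non-increasing behaviour on $[a,c]$ is entirely symmetric: if $f$ failed to be non-increasing there, I would find $a\le s<t\le c$ with $f(s)<f(t)$, deduce from the minimality of $f(c)$ that $t<c$, observe that $t$ is interior to $[s,c]$ with $f(t)>\max\{f(s),f(c)\}$, and again invoke Proposition \ref{lation} to obtain a forbidden local strict maximum on $(s,t)$. The degenerate cases $c=a$ or $c=b$ cause no trouble, since one of the two intervals then reduces to a single point and the corresponding claim is vacuous.

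I expect the only delicate point to be the bookkeeping of strict versus non-strict inequalities at the endpoints when applying Proposition \ref{lation}: the hypothesis of that proposition requires the interior value to exceed both endpoint values strictly, and it is precisely the choice of $c$ as a global minimizer that upgrades the available inequality $f(c)\le f(s)$ (respectively $f(c)\le f(t)$) to the strict one needed, once it is combined with the assumed violation of monotonicity. Everything else is a routine contradiction argument.
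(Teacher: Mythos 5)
Your proof is correct and follows essentially the same route as the paper: take $c$ to be a global minimizer and use Proposition \ref{lation} to convert any violation of monotonicity on either side of $c$ into a forbidden local strict maximum. The only difference is cosmetic—you handle the endpoint cases ($s=c$, $t=b$) a bit more explicitly than the paper does.
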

\begin{proof}
Since $f$ is continuous, we can choose $c\in[a,b]$ such that $f(c)=\min f$. To show that $f$ is non-decreasing on $[c,b]$, let $c<y_1<y_2<b$ and assume, on the contrary, that
$f(y_2)<f(y_1)$. This implies that $f(y_1)>\max\{f(c),f(y_2)\}$, and thus $f$ has a strict local maximum on $(c,y_2)$ by Proposition \ref{lation}. This is the desired contradiction.
To show that $f$ is non-increasing on $[a,c]$, let $a<y_1<y_2<c$ and assume, on the contrary, that
$f(y_1)<f(y_2)$. This implies that $f(y_2)>\max\{f(y_1),f(c)\}$, and thus $f$ has a strict local maximum on $(y_1,c)$ by Proposition \ref{lation}. This is the desired contradiction. 
\end{proof}


Let us define for $0<a\leq b<\infty$ the annular domains 
\begin{align*}
 A_n(a,b):=&A(a,b):=\{x\in\rn\,:\,a< |x|< b\}\,\,\text{ and }\\
 A_n[a,b]:=&A[a,b]:=\{x\in\rn\,:\,a\leq |x|\leq b\}\,.
\end{align*}

\begin{lemma}\label{simppeli}
 If $f\in W^{1,1}(\rn)$ is radial, then $Mf$ does not have a local strict maximum in $\{t\in(0,\infty):\,Mf(t)>f(t)\}\,$.
\end{lemma}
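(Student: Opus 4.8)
The plan is to argue by contradiction and to squeeze a genuine first--order optimality condition out of the optimal ball. We may assume $f\ge 0$ (replace $f$ by $|f|$, which is again radial and lies in $W^{1,1}$, with the same maximal function). Suppose $Mf$ had a local strict maximum at a radius $t_0$ with $M_0:=Mf(t_0)>f(t_0)$, and fix witnesses $a'<t_0<b'$ with $Mf\le M_0$ on $(a',b')$ and $\max\{Mf(a'),Mf(b')\}<M_0$. Let $x_0$ be a point with $|x_0|=t_0$. Since $f\in L^1(\rn)$ and $f(x_0)<Mf(x_0)<\infty$, we have $\mathcal B_{x_0}\neq\emptyset$, so fix $B_0=B(z_0,r_0)\in\mathcal B_{x_0}$; thus $\intav_{B_0}f=M_0$ and $x_0\in\bar B_0$. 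The first step is the elementary transport fact that $Mf(y)\ge M_0$ for every $y\in\bar B_0$: if $y\in B_0$ then $B_0$ is admissible for $y$, while if $y\in\partial B_0$ one tests with the balls $B(z_0+\eps(y-z_0),r_0)\ni y$ and lets $\eps\to 0^+$. Hence the radial range $[\rho_-,\rho_+]:=\{|y|:y\in\bar B_0\}\ni t_0$ lies in $\{Mf\ge M_0\}$. The strict maximum now pins this range down: $\rho_+\ge b'$ would give $b'\in(\rho_-,\rho_+]$ and so $Mf(b')\ge M_0$, and symmetrically $\rho_-\le a'$ is impossible; therefore $[\rho_-,\rho_+]\subset(a',b')$, whence $Mf\equiv M_0$ there and $f\le Mf=M_0$ at every radius in $[\rho_-,\rho_+]$, in particular $\mathcal H^{n-1}$--a.e.\ on $\partial B_0$.

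The core is the interaction of two facts about $B_0$. On one hand, radial symmetry forces rigidity: $Df(y)=f'(|y|)\,y/|y|$ is a radial vector field and $B_0$ is invariant under the rotations fixing the line $\re z_0$, so averaging gives $\intav_{B_0}Df\parallel z_0$ (and $\intav_{B_0}Df=0$ when $z_0=0$). On the other hand $(z_0,r_0)$ maximizes $F(z,r):=\intav_{B(z,r)}f$ over the feasible set $\{|z-x_0|\le r\}$. Differentiating $F$ through Proposition \ref{aux1} (so that $\partial_z F=\intav_{B}Df$ and $\partial_r F=\tfrac{n}{r}\bigl(\intav_{\partial B}f-F\bigr)$), the Lagrange/KKT conditions at this constrained maximum supply a multiplier $\mu\ge0$ with
\[
\intav_{B_0}Df=2\mu\,(z_0-x_0),\qquad \intav_{\partial B_0}f=M_0-\tfrac{2\mu r_0^2}{n}.
\]

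From here two cases close the argument. If $\mu=0$ (this covers $x_0$ interior and $z_0=0$), the second relation gives $\intav_{\partial B_0}f=M_0$; since $f\le M_0$ on $\partial B_0$, this forces $f\equiv M_0$ on $\partial B_0$, hence $f\equiv M_0$ on $[\rho_-,\rho_+]$ by continuity, so $f(t_0)=M_0=Mf(t_0)$, contradicting $Mf(t_0)>f(t_0)$. If $\mu>0$, the first relation makes $z_0-x_0$ parallel to $z_0$, which is possible only when $x_0$ is the near or the far point of $B_0$; then $\intav_{B_0}Df$ has strictly negative inner product with $x_0/|x_0|$ at the far point and strictly positive at the near point. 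Translating $B_0$ by $\mp\delta\,x_0/|x_0|$ accordingly produces, again via Proposition \ref{aux1}, a ball whose average exceeds $M_0$ and whose extreme point sits at radius $t_0-\delta$ (far) or $t_0+\delta$ (near); this yields $Mf>M_0$ at a radius in $(a',b')$, contradicting $Mf\le M_0$.

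The step I expect to be the real obstacle is ruling out a plateau: all purely local, first--order data are consistent with $Mf$ being flat at its maximal value on $[\rho_-,\rho_+]$ and dropping just outside, so the contradiction cannot come from one--sided monotonicity (as in the one--dimensional case) and must instead be extracted from the global optimality of $B_0$ --- precisely by coupling the KKT identity with the symmetry constraint $\intav_{B_0}Df\parallel z_0$. Secondary care is needed to justify differentiating $F$ in both $z$ and $r$ (through Proposition \ref{aux1} and the continuity of the radial representative of $f$) and to dispatch the degenerate configurations $z_0=0$ and $n=1$.
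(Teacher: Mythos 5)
Your first paragraph is, in substance, exactly the paper's own argument (with the paper's $t^\pm$ replaced by your witnesses $a',b'$): the transport of the value $M_0$ to all of $\bar B_0$, the conclusion that the radial range $[\rho_-,\rho_+]$ of $\bar B_0$ must sit inside $(a',b')$, hence $Mf\equiv M_0$ and $f\le M_0$ at every radius in $[\rho_-,\rho_+]$. All of this is correct. But you stop one step short of the finish these facts already force, and which is how the paper concludes: every point of $B_0$ has radius in $[\rho_-,\rho_+]$, so $f\le Mf=M_0$ a.e.\ on $B_0$; moreover, since $x_0\in\bar B_0$ and the radial representative is continuous with $f(t_0)<M_0$, the nonempty open set $B_0\cap B(x_0,\eps)$ has positive measure and consists of points whose radii lie where $f<M_0$. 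Hence $\intav_{B_0}f<M_0$, contradicting $\intav_{B_0}f=M_0$. No derivative of anything is needed. In particular, the diagnosis in your last paragraph is backwards: the ``plateau'' configuration is not the obstacle but the engine --- once $Mf$ is constant on the radial range of $B_0$, the zeroth-order averaging argument kills the maximizing ball outright, and no first-order/KKT information is required.

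Your KKT detour is mostly salvageable but, as written, incomplete in two places. First, the formula $\partial_r F=\tfrac{n}{r}\bigl(\intav_{\partial B}f-F\bigr)$ presumes $f$ is well behaved up to $\partial B_0$; for radial $W^{1,1}$ functions this is delicate when $0\in\partial B_0$ (such functions may blow up like $|y|^{-(n-1)}$ at the origin). This is repairable --- compute the dilation derivative via Proposition \ref{aux1} as $\tfrac{1}{r_0}\intav_{B_0}Df(y)\cdot(y-z_0)\,dy$, or run the whole first-order analysis along affine families as in Lemma \ref{aux2} --- but you defer it rather than do it. Second, and more seriously, your $\mu=0$ branch genuinely fails for $n=1$: there $\partial B_0$ is two points, so ``$f=M_0$ $\mathcal{H}^{n-1}$-a.e.\ on $\partial B_0$'' gives only the two endpoint values, and the step ``hence $f\equiv M_0$ on $[\rho_-,\rho_+]$ by continuity'' does not follow. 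You flag $n=1$ as a degenerate case to dispatch but never dispatch it, and within your framework there is nothing to dispatch it with; the elementary averaging argument you already had in hand at the end of your first paragraph is what closes that case (and every case, in every dimension) at once.
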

\begin{proof}
Suppose, on the contrary, that $t_0\in(0,\infty)$ is a local strict maximum of $Mf$ and $Mf(t_0)>f(t_0)$. Let us choose 
\begin{align*}
 t^-:=&\sup\{t<t_0:\,Mf(t)<Mf(t_0)\}\,\text{ and }\\
 \,t^+:=&\inf\{t>t_0\,:\,Mf(t)<Mf(t_0)\}\,.
\end{align*}
By the definition of the local strict maximum, it follows that 
$t_0\in[t^-,t^+]$ and
\begin{equation}\label{eq:last}
Mf(t)=Mf(t_0) \text{ for all }t\in[t^-,t^+]\,. 
\end{equation}
Suppose that $|x|=t_0$. Since $Mf(t_0)>f(t_0)$, it follows that there exist a ball $B$ such that 
$Mf(t_0)=\intav_{B}|f|$, $x\in \bar{B}\,$. Suppose first that $B\not\subset  A[t^-,t^+]$. In this case there exists $\eps>0$ such that 
$[t^--\eps,t^-]\subset\{|y|: y\in \bar{B}\}$ or $[t^+,t^++\eps]\subset\{|y|: y\in \bar{B}\}$. Especially, it follows by the definition of $M$ that
$Mf(t)\geq \intav_{B}|f|=Mf(t_0)$ if $t\in[t^--\eps,t^-]$ or $t\in[t^+,t^++\eps]$, respectively. Obviously this contradicts with the choice of $t^-$ and $t^+$.
This verifies that $B\subset  A[t^-,t^+]$. Therefore, it holds by (\ref{eq:last}) that
\begin{equation}\label{dispose}
Mf(y)= Mf(t_0)\text{ for all }y\in B\,.
\end{equation} However,
$f(t_0)<Mf(t_0)$ also implies that there exists a ball $B'$ with positive radius such that 
$B'\subset B$ and $f<Mf(t_0)$ in $B'$. Combining this with (\ref{dispose}) yields the desired contradiction by
\begin{align*}
Mf(t_0)&=\intav_{B}|f|\leq\frac{1}{|B|}\bigg(\int_{B\setminus B'}|f|+\int_{B'}|f|\bigg)\\
&<\frac{1}{|B|}\bigg(\int_{B\setminus B'}Mf+\int_{B'}Mf(t_0)\bigg)
=Mf(t_0)\,.
\end{align*}
\end{proof}

%
%

Recall the definition of $f_{/4}$ (the endpoint operator of $M^I$, (\ref{def99})) from the introduction. Before showing the boundedness for $M^I$, we have to prove the boundedness
for $f_{/4}$.

\begin{proposition}\label{basic2}
If $f\in W^{1,1}(\rn)$, 
then $f_{/4}\in W^{1,1}(\rn)$ and $\norm{Df_{/4}}_1\leq C_n\norm{Df}_1\,$.
\end{proposition}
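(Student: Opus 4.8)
The plan is to prove the bound for the endpoint operator $f_{/4}(x)=\sup_{x\in B(z,|x|/4)}\intav_{B(z,|x|/4)}|f|$ by direct computation, exploiting that the supremum is taken over balls of a \emph{fixed} radius $r=|x|/4$ depending only on $|x|$. First I would establish a pointwise gradient estimate. Since $f_{/4}$ is the supremum over admissible balls, I expect it to be Lipschitz away from the origin and differentiable a.e.; at a point $x$ of differentiability where the supremum is attained (or nearly attained) by a ball $B=B(z,|x|/4)$, the subtlety compared with Lemma~\ref{peruskama} is that \emph{both} the center $z$ \emph{and} the radius $|x|/4$ move as $x$ moves, because the radius is tied to $|x|$. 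So I would split the perturbation $x\mapsto x+hv$ into two effects: translating/recentering the ball (which is controlled exactly as in part~(1) of Lemma~\ref{peruskama}, giving a term $\intav_B D_v|f|$), and the dilation coming from the change of radius $|x+hv|/4$ versus $|x|/4$. The dilation contributes a term that I would handle using Proposition~\ref{aux1} with an affine map $L_h$ that scales $B$ about its center by the factor $(|x+hv|/4)/(|x|/4)$; its derivative in $h$ is proportional to $(v\cdot x/|x|)/|x|$, producing a factor $1/|x|$ times $\intav_B D|f|(y)\cdot(y-z)$.

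Combining these, the key pointwise estimate I aim for has the schematic form
\begin{equation*}
|Df_{/4}(x)|\,\le\,\intav_{B}|D|f|(y)|\,dy\,+\,\frac{C_n}{|x|}\intav_{B}|D|f|(y)|\,|y-z|\,dy\,,
\end{equation*}
where $B=B(z,|x|/4)$ is an (almost) optimal ball, so $|y-z|\le |x|/4$ on $B$ and the second term is bounded by $C_n\intav_B|D|f||$ as well. Hence, crucially, $|Df_{/4}(x)|\le C_n\,\intav_{B(z,|x|/4)}|Df(y)|\,dy$ with $z$ satisfying $|z|\le |x|+|x|/4$ and $|z|\ge |x|-|x|/4$, so $B(z,|x|/4)$ is contained in the annulus $A[|x|/2,\,3|x|/2]$ (up to adjusting the constant). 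This confines the averaging ball to an annulus whose inner and outer radii are comparable to $|x|$, which is exactly the feature that makes the final integration tractable.

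The last step is to integrate this pointwise bound over $\rn$ and apply Fubini, in the same spirit as the computation in Corollary~\ref{cor1}. Writing $|Df_{/4}(x)|\le \frac{C_n}{|x|^n}\int_{B(z,|x|/4)}|Df(y)|\,dy$ and using that $y\in B(z,|x|/4)$ forces $|y|$ comparable to $|x|$ (say $|x|/2\le|y|\le 2|x|$), I would exchange the order of integration: for fixed $y$, the set of admissible $x$ lies in an annulus $|x|\in[|y|/2,\,2|y|]$, and the weight $|x|^{-n}$ integrated over that annulus against $d^{n-1}$ surface measure and $dt$ yields a finite constant $C_n$ independent of $y$. Thus $\int_{\rn}|Df_{/4}|\le C_n\int_{\rn}|Df|$. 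I would also separately note $f_{/4}\in L^1$ fails in general, but since only $W^{1,1}$ membership modulo the gradient bound is claimed — and $f_{/4}$ is controlled by $Mf$ pointwise — the statement as phrased concerns $\norm{Df_{/4}}_1$, so the gradient bound plus local integrability suffices.

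The main obstacle I anticipate is the careful justification of the pointwise differentiation formula when the radius is locked to $|x|/4$: one must show that the value is attained (or attained in the limit) by some ball, that $f_{/4}$ is genuinely differentiable a.e., and that the two competing perturbations (recentering and rescaling) can be bounded simultaneously by a supremum/infimum argument as in Lemma~\ref{peruskama}(1), where one uses a fixed near-optimal ball to get the lower bound on $D_vf_{/4}$ and the optimal ball at the shifted point to get the upper bound. Unlike the unconstrained case, there is no analogue of the orthogonality relation~\eqref{char} to kill the rescaling term, so that term genuinely contributes the $C_n/|x|$ factor; fortunately the constraint $r=|x|/4$ bounds $|y-z|\le|x|/4$ on $B$, which tames it. Verifying that these manipulations are legitimate for a general (not everywhere differentiable) $W^{1,1}$ function — rather than just formally — is where the real work lies.
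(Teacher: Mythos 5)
Your core argument is essentially the paper's own proof. The paper likewise takes a (near-)optimal ball $B=\bar B(z,|x|/4)\ni x$, transports it by the affine map $g_h(y)=x+hv+\frac{|x+hv|}{|x|}(y-x)$ (your recentering and rescaling packaged into a single map, scaled about $x$ rather than about $z$ --- an immaterial difference), applies Proposition \ref{aux1} to obtain
\begin{equation*}
\lim_{h\to 0^+}\frac{f_{/4}(x)-f_{/4}(x+hv)}{h}\,\le\, \intav_{B} D|f|(y)\cdot\Bigl(v+\tfrac{v\cdot x}{|x|^2}(y-x)\Bigr)\,dy\,\le\, C_n\intav_{B(x,|x|/2)}\bigl|D|f|(y)\bigr|\,dy\,,
\end{equation*}
and then integrates this pointwise bound by Fubini over exactly your annuli $|x|$ comparable to $|y|$; the ``Lipschitz outside the origin plus a.e.\ bound on difference quotients'' reasoning you anticipate is also precisely how the paper legitimizes the pointwise estimate, including the one-sided trick of assuming $f_{/4}(x)>f_{/4}(x+hv)$ and comparing the optimal ball at $x$ with an admissible ball at $x+hv$.

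One side claim of yours is, however, false, and it matters because the proposition literally asserts $f_{/4}\in W^{1,1}(\rn)$: the $L^1$ membership of $f_{/4}$ does \emph{not} fail. The domination $f_{/4}\le Mf$ only bounds $f_{/4}$ from above, so it cannot show non-integrability; and the very radius constraint you exploit for the gradient bound also gives $L^1$-boundedness of the operator $f\mapsto f_{/4}$. Indeed, every admissible ball $B(z,|x|/4)\ni x$ is contained in the annulus $A(|x|/2,3|x|/2)$ and has volume $\omega_n(|x|/4)^n$, so
\begin{equation*}
f_{/4}(x)\,\le\, \frac{4^n}{\omega_n|x|^n}\int_{A(|x|/2,\,3|x|/2)}|f(y)|\,dy\,,
\end{equation*}
and the same Fubini computation over annuli (for fixed $y$, the admissible $x$ satisfy $|x|\in[2|y|/3,2|y|]$, against the weight $|x|^{-n}$) yields $\norm{f_{/4}}_1\le C_n\norm{f}_1$. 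So no reinterpretation of the statement is needed: $f_{/4}\in W^{1,1}(\rn)$ holds as stated, by the gradient estimate you proved together with this $L^1$ bound.
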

\begin{proof}
It is easy to check that $f_{/4}$ is Lipschitz outside the origin. Therefore, it suffices to verify the desired norm estimates for $Df_{/4}$. 
We will exploit Proposition \ref{aux1}. If $x\not=0$, we are going to show that if $h>0$ is small enough and $v\in S^{n-1}$, then 
\begin{equation}\label{aim1}
\frac{1}{h}|f_{/4}(x)-f_{/4}(x+hv)|\leq C_n\intav_{B(x,\frac{|x|}{2})}|D|f|(y)|\,dy\,.
\end{equation}
To show this, we may assume that $f_{/4}(x)>f_{/4}(x+hv)$.
Suppose that 
\begin{align*}
f_{/4}(x)=&\intav_{B(z,|x|/4)}|f(y)|dy\,\,,\,\,\,\,x\in \bar{B}(z,|x|/4)=:B\,,\\
g_h(y):=&x+hv+\frac{|x+hv|}{|x|}(y-x)\,\text{ and }\\
B_h:=&g_h(B)=B(x+hv+\frac{|x+hv|}{|x|}(z-x),|x+hv|/4)\,.
\end{align*}
Especially, $x+hv\in \bar{B}_h$. 
Moreover, it is easy to compute that
\begin{equation*}
\lim_{h\to 0}\frac{g_h(y)-y}{h}=\lim_{h\to 0}\frac{hv+\big(\,\frac{|x+hv|}{|x|}-1\,\big)\big(y-x\big)}{h}\,=\,v+\frac{v\cdot x}{|x|^2}(y-x)\,.
\end{equation*}
Then it follows by Proposition \ref{aux1} that
\begin{align*}
&\lim_{h\to 0}\,\frac{f_{/4}(x)-f_{/4}(x+hv)}{h}\leq 
\lim_{h\to 0}\frac{1}{h}\bigg(\intav_{B}|f(y)|\,dy\,-\intav_{B_h}|f(y)|\,dy\,\bigg)\,\\
=&\,\intav_{B}D|f|(y)\cdot (v+\frac{v\cdot x}{|x|^2}(y-x))\,dy\,\leq
\intav_{B}|D|f|(y)|(1+\frac{|y-x|}{|x|})\,dy\,\\
\leq &\intav_{B}(1+\frac{1}{4})|D |f|(y)|\,dy\,\leq C_n\intav_{B(x,\frac{|x|}{2})}|D |f|(y)|\,dy\,.
\end{align*}
This proves (\ref{aim1}).
Then the claim follows (e.g) by using Fubini Theorem: Let us denote below $B_x=B(x,\frac{|x|}{2})\,$.
By the above estimate, 
\begin{align*}
&\int_{\rn}|Df_{/4}(x)|\,dx\,\leq\,C_n\int_{\rn}\int_{\rn}\frac{\chi_{B_x}(y)}{|B_x|}|Df(y)|\,dx\,dy\,\\
\leq&\,C_n\int_{\rn}|Df(y)|\bigg(\int_{\{x\,:\,\frac{2|y|}{3}\leq |x|\leq 2|y|\}}|B_x|^{-1}\,dx\,\bigg)\,dy\,
\leq C'_n\norm{Df}_1\,.
\end{align*}
\end{proof}


The following estimate is well known.
\begin{proposition}
If $f\in W^{1,1}(\rn)$ is radial and $0<a<b<\infty$, then 
\begin{equation*}
\sigma_na^{n-1}\int_a^b|f'(t)|\,dt\,\leq \int_{A(a,b)}|Df(y)|\,dy\,\leq \sigma_nb^{n-1}\int_a^b|f'(t)|\,dt\,.
\end{equation*}
\end{proposition}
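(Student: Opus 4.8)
The plan is to reduce the integral of $|Df|$ over the annulus $A(a,b)$ to a one-dimensional integral against the weight $t^{n-1}$, and then to bound that weight between $a^{n-1}$ and $b^{n-1}$. The whole statement then becomes a two-line consequence of polar coordinates and the monotonicity of $t\mapsto t^{n-1}$, so the only point deserving attention is the structural identity for the gradient of a radial function.

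First I would record that identity, which is already implicit in the discussion preceding the statement. Since $f$ is radial it is constant on every sphere $\{|y|=t\}$, so its tangential derivatives vanish and the weak gradient is purely radial; concretely, for almost every $y\neq 0$ one has $|Df(y)|=|f'(|y|)|$, where $f'$ denotes the radial derivative $\tilde f'$ introduced above (recall $D_{y/|y|}f(y)=f'(|y|)$). Granting this, I would integrate in spherical shells: passing to polar coordinates and using that $|Df(y)|$ depends on $y$ only through $|y|$ gives the exact identity
\begin{equation*}
\int_{A(a,b)}|Df(y)|\,dy\,=\,\int_a^b\bigg(\int_{S^{n-1}}|f'(t)|\,d\mathcal{H}^{n-1}\bigg)t^{n-1}\,dt\,=\,\sigma_n\int_a^b|f'(t)|\,t^{n-1}\,dt\,.
\end{equation*}

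Finally, since $n\geq 1$ the map $t\mapsto t^{n-1}$ is non-decreasing on $(0,\infty)$, so $a^{n-1}\leq t^{n-1}\leq b^{n-1}$ for every $t\in[a,b]$. As $|f'(t)|\geq 0$, multiplying by these bounds and integrating over $[a,b]$ produces both inequalities at once. There is no genuine obstacle here: the entire content is the radial gradient identity $|Df(y)|=|f'(|y|)|$ combined with the elementary monotonicity of the power $t^{n-1}$, which is exactly why the result is labelled well known.
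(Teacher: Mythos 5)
Your proof is correct. The paper in fact gives no proof of this proposition at all (it is stated as ``well known''), and your argument --- the a.e.\ identity $|Df(y)|=|f'(|y|)|$ for a radial Sobolev function, followed by polar coordinates and the pointwise bound $a^{n-1}\leq t^{n-1}\leq b^{n-1}$ on $[a,b]$ --- is exactly the standard argument the paper implicitly invokes; indeed your intermediate polar-coordinate identity $\int_{A(a,b)}|Df|=\sigma_n\int_a^b|f'(t)|t^{n-1}\,dt$ is sharper than the two-sided estimate being claimed. The only step that a referee might ask you to expand is the assertion that the weak gradient of a radial $W^{1,1}$ function is purely radial almost everywhere; this is most cleanly justified by mollifying with a radial kernel (which preserves radiality and hence yields smooth radial approximants with radial gradients converging in $L^1$), but since the annulus stays away from the origin this presents no real difficulty.
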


\subsection*{The proof of Lemma \ref{basic3}}
Let 
\begin{equation*}
 g(x)=\max\{f_{/4}(x),|f(x)|\}\,.
\end{equation*}
By Lemma \ref{basic} and Proposition \ref{basic2} it follows that $g\in W^{1,1}(\rn)$ and $\norm{Dg}_1\leq C_n\norm{Df}_1\,$.
Let
\begin{equation*}
E:=\{x\in\rn\,:\,M^If(x)>g(x)\}\,\text{ and }\,E_k:=E\cap A[2^{-k},2^{-k+1}]\,,\,\,k\in\na.
\end{equation*}
It is well known that mapping $M^If$ is locally Lipschitz in $E$ and, especially, $D(M^If)$ exists in $E$. By Lemma \ref{basic}, it suffices to show that
$\int_{E}|DM^If|\leq C_n\norm{Dg}_1\,$.

First observe that since $|f|$ is radial, 
it follows that $M^If$ and $g$ are radial as well, and continuous in $\rn\setminus\{0\}$. In particular, if 
\begin{equation*}
E_k^{\re}:=\{|x|\,:\,x\in E_k\}\,,
\end{equation*}
then $x\in E_k$ if and only if $|x|\in E^{\re}_k$. Since $E_k^{\re}$ is open, we can write 
\begin{equation*}
 E_k^{\re}=\cup_{i=1}^{\infty}(a_i,b_i)\,,
\end{equation*}
such that $a_i<b_i$, $(a_i,b_i)$ are pairwise disjoint and $a_i,b_i\in\partial E_k^{\re}$. In the other words,
\begin{equation*}
E_k=\bigcup_{i=1}^{\infty}A(a_i,b_i)\,,
\end{equation*}
and (by the definition of $E_k$) for all $i\in\na$ it holds that
\begin{equation}\label{cases}
 M^If(x)=g(x)\text{ if }|x|=a_i>2^{-k}\text{ and }M^If(x)=g(x)\text{ if }|x|=b_i<2^{-k+1}\,.
\end{equation}
Moreover, since $M^{I}f>f$ in $E_k$, Lemma \ref{simppeli} says that $M^{I}f$ does not have a strict local maximum in $E_k^{\re}$. In particular, by Proposition 
\ref{deejee} 
there exist $c_i\in(a_i,b_i)$ such that
\begin{align*}
 \int_{A(a_i,b_i)}DM^{I}f(y)\,dy\,&\leq \,\sigma_n b_i^{n-1}\int_{a_i}^{b_i}|(M^{I}f)'(t)|\,dt\,\\
 &= \,\sigma_n b_i^{n-1}(M^If(a_i)-M^If(c_i)+M^If(b_i)-M^If(c_i))\\
&\leq\,\sigma_n  b_i^{n-1}(M^If(a_i)-g(c_i)+M^If(b_i)-g(c_i))\,.
\end{align*}
Combining this with (\ref{cases}) implies that if $2^{-k}<a_i<b_i<2^{-k+1}$, then
 \begin{align*}
\int_{A(a_i,b_i)}DM^{I}f(y)\,dy\,&\leq
 \sigma_n b_i^{n-1}(g(a_i)-g(c_i)+g(b_i)-g(c_i))\\
&\leq \sigma_nb_i^{n-1}\int_{a_i}^{b_i}|g'(t)|\,dt\,\leq\,\bigg(\frac{b_i}{a_i}\bigg)^{n-1}\int_{A(a_i,b_i)}|Dg(y)|\,dy\,\\
&\leq\,2^{n-1}\int_{A(a_i,b_i)}|Dg(y)|\,dy\,.
 \end{align*}
For the case $a_i=2^{-k}$ or $b_i=2^{-k+1}$, we employ the fact
\begin{equation*}
M^If(2^{-k}),M^If(2^{-k+1})\leq \sup_{y\in A(2^{-k-1},2^{-k+2})}g(y)\,
\end{equation*}
to obtain the estimates ($a_i=2^{-k}$ or $b_i=2^{-k+1}$)
\begin{align*}
\int_{A(a_i,b_i)}DM^{I}f(y)\,dy\,&\leq
 \sigma_n b_i^{n-1}(M^If(a_i)-g(c_i)+M^If(b_i)-g(c_i))\\
&\leq \sigma_nb_i^{n-1}\int_{2^{-k-1}}^{2^{-k+2}}|g'(t)|\,dt\,\\
&\leq\,2^{3(n-1)}\int_{A(2^{-k-1},2^{-k+2})}|Dg(y)|\,dy\,.
 \end{align*}
Combining these estimates implies that
\begin{align*}
&\int_{E_k}|DM^{I}f(y)|\,dy\,=\sum_{i=1}^{\infty}\int_{A(a_i,b_i)}|DM^{I}f(y)|\,dy\,\\ 
\leq &\,2^{n-1}\sum_{i=1}^{\infty}\bigg[\int_{A(a_i,b_i)}|Dg(y)|\,dy\,\bigg]\,+2(2^{3(n-1)})\int_{A(2^{-k-1},2^{-k+2})}|Dg(y)|\,dy\,\\
\leq &\,2^{3n}\int_{A(2^{-k-1},2^{-k+2})}|Dg(y)|\,dy\,.
\end{align*}
Therefore, 
\begin{align*}
\int_{E}|DM^If(y)|\,dy\,&\leq \sum_{k\in\Z}\int_{E_k}|DM^{I}f(y)|\,dy\\
&\leq 2^{3n}\sum_{k\in\Z}\int_{A(2^{-k-1},2^{-k+2})}|Dg(y)|\,dy\\
&= \,3(2^{3n})\sum_{k\in\Z}\int_{A(2^{-k},2^{-k+1})}|Dg(y)|\,dy\,=\,3(2^{3n})\norm{Dg}_1\,.
\end{align*}
This completes the proof.
\hfill$\Box$

Then we are ready to prove our main theorem.

\begin{theorem}\label{main}
If $f\in W^{1,1}(\rn)$ is radial, then $Mf\in W^{1,1}(\rn)$ and 
$\norm{DMf}_1\leq C_n\norm{Df}_1\,$.
\end{theorem}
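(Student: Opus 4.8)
The plan is to peel off from $Mf$ the auxiliary part $M^If$, which is already controlled by Lemma~\ref{basic3}, and to estimate the remaining contribution of the \emph{large} balls by combining the radiality of $Mf$ with Lemma~\ref{peruskama}~$(6)$ and a Fubini argument in the spirit of Corollary~\ref{cor1}. I may assume $n\geq 2$, the case $n=1$ being covered by \cite{Ta}. First I would set
\begin{equation*}
E:=\{x\in\rn\,:\,Mf(x)>M^If(x)\}\,.
\end{equation*}
Since $M^If\leq Mf$ we have $\max\{M^If,Mf\}=Mf$, and on the open set $E$ it holds that $Mf>M^If\geq|f|$, so that $Mf$ is locally Lipschitz, hence weakly differentiable, there. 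Once the bound $\int_E|DMf|\leq C_n\norm{Df}_1$ below is in hand, Lemma~\ref{basic}, applied with the continuous $W^{1,1}$-function $M^If$ (Lemma~\ref{basic3}) in the role of $f$ and with $Mf$ in the role of $g$, yields that $Mf$ is weakly differentiable with $DMf=\chi_{E^c}DM^If+\chi_{E}DMf$, whence
\begin{equation*}
\norm{DMf}_1\leq\norm{DM^If}_1+\int_E|DMf|\leq C_n\norm{Df}_1+\int_E|DMf|\,.
\end{equation*}
It therefore remains to bound $\int_E|DMf|$.

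For a.e.\ $x\in E$ pick a ball $B=B(z,r)\in\mathcal{B}_x$. Because $Mf(x)>M^If(x)$ while $M^I$ only uses radii $\leq|x|/4$, necessarily $r>|x|/4$; and since $x\in\bar B$ and $y\in B$, the triangle inequality gives $r>|x-y|/2$ for every $y\in B$. As $f$ is radial, $Mf$ is radial, so $DMf(x)$ is parallel to $x$, and Lemma~\ref{peruskama}~$(6)$ provides the pointwise bound
\begin{equation*}
|DMf(x)|=\Big|DMf(x)\cdot\tfrac{x}{|x|}\Big|\leq\frac{1}{|x|}\intav_{B}|D|f|(y)|\,|y|\,dy=\frac{1}{|x|\,\omega_n r^{n}}\int_{B}|D|f|(y)|\,|y|\,dy\,.
\end{equation*}
This is the mechanism behind (\ref{ref1}), now used over the genuine maximal ball $B$ rather than over $B(0,|x|)$, which is essential because on $E$ the radius $r$ need \emph{not} be bounded above by a multiple of $|x|$.

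Finally, after a routine measurable selection $x\mapsto B(z_x,r_x)$ of the maximal balls, I would integrate over $E$ and apply Tonelli:
\begin{equation*}
\int_E|DMf(x)|\,dx\leq\int_{\rn}|D|f|(y)|\,|y|\bigg(\int_{\{x\in E:\,y\in B(z_x,r_x)\}}\frac{dx}{|x|\,\omega_n r_x^{\,n}}\bigg)\,dy\,.
\end{equation*}
Writing $\rho(x,y):=\max\{|x|/4,\,|x-y|/2\}$, the constraints $r_x>|x|/4$ and $r_x>|x-y|/2$ give $r_x>\rho(x,y)$, so the inner integral is at most $\int_{\rn}\big(|x|\,\omega_n\,\rho(x,y)^{n}\big)^{-1}\,dx$. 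This kernel is jointly homogeneous of degree $-(n+1)$ in $(x,y)$, so its $x$-integral equals a constant multiple of $|y|^{-1}$, and the constant is finite for $n\geq 2$ since the only candidate singularities, at $x=0$, at $x=y$, and at infinity, are all integrable there. Hence the inner integral is $\leq C_n/|y|$ and
\begin{equation*}
\int_E|DMf|\leq C_n\int_{\rn}|D|f|(y)|\,dy=C_n\norm{Df}_1\,,
\end{equation*}
which with the first step completes the argument.

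The crux is this kernel estimate. The heart of the matter is that the \emph{lower} bound $r_x>|x|/4$, i.e.\ exactly the reason $M^I$ was introduced, forces $\rho(x,y)\geq|x|/4$ and thereby renders the kernel integrable near the diagonal $x=y$ and near the origin; if the small balls were not removed, the corresponding integral would diverge logarithmically at $x=y$ (and fail to be integrable at $x=0$ when $n=1$). This divergence is the genuine obstacle, and the decomposition through $M^I$ together with Lemma~\ref{basic3} is precisely what is designed to eliminate it.
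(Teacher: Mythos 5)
Your proposal is correct (granting the same routine facts the paper itself takes for granted: continuity and local Lipschitzness of $Mf$, $M^If$ on the relevant sets, and measurability of the selection $x\mapsto B_x$), and it shares the paper's skeleton: write $Mf=\max\{M^If,Mf\}$, control $DM^If$ by Lemma~\ref{basic3}, reduce via Lemma~\ref{basic} to bounding $\int_E|DMf|$, and estimate that integral by Lemma~\ref{peruskama} plus Fubini. Where you genuinely diverge is in the core estimate on $E$. The paper first pins down the geometry of the maximal ball using radiality and Lemma~\ref{peruskama}~(2)--(3): $x\in\partial B_x$ and $B_x=B(c_xx,|c_x-1|\,|x|)$ with $c_x\geq-1$ and $|c_x-1|>1/4$; it then splits $E=E_+\cup E_-$ according to whether the ball sits outward ($c_x>5/4$) or inward ($-1\leq c_x<3/4$), using the weighted estimate (6) only on $E_-$ (where $B_x\subset B(0,|x|)$ forces the weight $|y|/|x|\leq 1$) and the unweighted estimate (1) on $E_+$, with two separate kernel computations. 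You instead apply the weighted estimate (6) on all of $E$ and compensate for the possibly large weight $|y|/|x|$ by the double lower bound $r_x>\max\{|x|/4,\,|x-y|/2\}$, reducing everything to a single kernel $\big(|x|\,\rho(x,y)^n\big)^{-1}$ whose $x$-integral is $C_n/|y|$ by homogeneity and rotation invariance; I checked the three regimes ($|x|\ll|y|$, $|x|\sim|y|$, $|x|\gg|y|$) and the bound is valid, so your argument closes, and it lets you dispense with the ball-geometry analysis (parts (2),(3) of Lemma~\ref{peruskama} and the $c_x$ classification) altogether. The trade-off is dimensional: near $x=0$ your kernel behaves like $|x|^{-1}$, which is integrable only for $n\geq 2$ --- this regime comes exactly from far-outward balls with $|y|\gg|x|$, the case the paper handles separately on $E_+$ with the unweighted estimate --- so you must quote \cite{Ta} for $n=1$, whereas the paper's two-case argument works uniformly in every dimension.
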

\begin{proof}
Let 
\begin{equation*}
E:=\{x\in\rn\,:\,Mf(x)>M^{I}f(x)\,,\,\,\,DMf(x)\not= 0\,\}.
\end{equation*}
It is well known that $Mf$ is locally Lipschitz in $\{Mf(x)>f(x)\}$, implying the existence of $DMf$ in $\{Mf(x)>f(x)\}$. 
Since $Mf\geq M^{I}f(x)$, it holds that $Mf(x)=\max\{Mf(x),M^{I}f(x)\}$. 
Therefore, the theorem follows by Lemmas \ref{basic} and \ref{basic3}, if we can show that 
\begin{equation}\label{goal}
\int_{E}|DMf(y)|\,dy\,\leq C_n\norm{Df}_1\,.
\end{equation}
To show this, 
observe first that
for all $x\in E$ there exist $r_x>\frac{|x|}{4}$ and $z_x\in\rn$ such that $x\in B(z_x,r_x)\in \mathcal{B}_x$. Moreover, since $DMf(x)\not=0$, Lemma \ref{peruskama} 
($(2)$ and $(3)$) says that
$x\in\partial B(z_x,r_x)$ and $DMf(x)/|DMf(x)|=(z_x-x)/|z_x-x|$. On the other hand, $Mf$ is radial and so $DMf(x)/|DMf(x)|=\pm x/|x|$. We conclude that 
\begin{equation*}
B_x=B(c_xx,|c_xx-x|)\text{ for some }c_x\in\re\,.
\end{equation*}
Observe that $r_x=|c_xx-x|=|c_x-1||x|> |x|/4$ by the assumption, and thus $|c_x-1|>1/4\,$. Moreover, it holds that $c_x\geq -1$. To see this, observe that
if $c_x<-1$, then $-x\in B_x$ and, since $Mf$ is radial, $B_x\in\mathcal{B}_{-x}$, implying by Lemma \ref{peruskama} that $0=DMf(-x)=DMf(x)$, which contradicts with 
the assumption $x\in E$. Summing up, we can write $E=E_+\cup E_-$, where 
\begin{equation*}
E_+=\{x\in E\,:\,c_x> 1+1/4\,\}\,\text{ and }\,E_-=\{x\in E\,:\,-1\leq c_x<3/4\,\}\,.
\end{equation*}
We are going to use different estimates for $DMf(x)$ in $E_+$ and $E_-\,$.  
Since $|DMf(x)|=|DMf(x)\cdot \frac{x}{|x|}|$, it follows from Lemma \ref{peruskama} (\ref{nice}) that
\begin{equation*}
|DMf(x)|\leq \frac{1}{|x|}\intav_{B_x}|D|f|(y)||y|\,dy\,.
\end{equation*}
This estimate will be used in $E_-$, while 
in $E_+$ we will use (easier) estimate $|DMf(x)|\leq \intav_{B_x}|D|f||\,$ (Lemma \ref{peruskama}, $(1)$). We get that
\begin{align*}
 &\int_{E}|DMf(x)|\,dx\,\leq\int_E\chi_{E_+}(x)|DMf(x)|+\chi_{E_-}(x)|DMf(x)|\,dx\,\\
 \leq & \int_E\chi_{E_+}(x)\bigg(\intav_{B_x}|D|f|(y)|dy\,\bigg)+\chi_{E_-}(x)\bigg(\intav_{B_x}|D|f|(y)|\frac{|y|}{|x|}\,dy\,\bigg)\,dx\,\\
 =& \int_E\int_{\rn}\frac{\chi_{E_+}(x)\chi_{B_x}(y)|D|f|(y)|}{|B_x|}+\frac{\chi_{E_-}(x)\chi_{B_x}(y)|D|f|(y)||y|}{|B_x||x|}\,dy\,dx\,\\
 =&\int_{\rn}|D|f|(y)|\bigg(\int_{E_+}\frac{\chi_{B_x}(y)}{|B_x|}\,dx\,+\int_{E_-}\frac{\chi_{B_x}(y)|y|}{|B_x||x|}\,dx\,\bigg)\,dy.
\end{align*}
If $y\in B_x$ and $x\in E_+$, it follows from the definition of $E_+$ that $|x|\leq |y|$. Moreover, 
$y\in B_x$ and $x\in E$ imply also that $r_x\geq \max\{|y-x|,\frac{|x|}{4}\}\geq \frac{|y|}{6}\,$. This implies the estimate
\begin{equation*}
\int_{E_+}\frac{\chi_{B_x}(y)}{|B_x|}\,dx\,\leq \int_{B(0,|y|)}\frac{dx}{\omega_n(|y|/6)^n}\leq C_n\,, \text{ for all }y\in\rn\,.
\end{equation*}
On the other hand, if $x\in E_-$, then $\,-1\leq c_x<3/4$ especially implies that $B_x\subset B(0,|x|)$. Therefore, if $x\in E_-$ and $y\in B_x$, then $y\in B(0,|x|)$, and thus 
$|x|\geq |y|\,$. Recall also that $r_x\geq \frac{|x|}{4}\,$. Combining these yields that 
\begin{align*}
 \int_{E_-}\frac{\chi_{B_x}(y)|y|}{|B_x||x|}\,dx\,\leq |y|\int_{\rn\setminus B(0,|y|)}\frac{dx}{\omega_n(|x|/4)^{n+1}}=C'_n|y|\int_{|y|}^{\infty}\frac{dt}{t^2}=C'_n\,,
\end{align*}
for all $y\in\rn\,$.
This completes the proof.
\end{proof}

\end{document}